\title{Tur\'an and Ramsey-type results for unavoidable subgraphs}
\author{Alp M\"uyesser\thanks{Freie Universit\"at, Institut für Mathematik and Berlin Mathematical School, email: \texttt{alp.muyesser@fu-berlin.de}.} \and Michael Tait\thanks{Villanova Univesity Department of Mathematics and Statistics, email: \texttt{michael.tait@villanova.edu}. Research is partially supported by National Science Foundation grant DMS-2011553.}}
\date{\vspace{-5ex}}
\definecolor{codegreen}{rgb}{0,0.6,0}
\definecolor{codegray}{rgb}{0.5,0.5,0.5}
\definecolor{codepurple}{rgb}{0.58,0,0.82}
\definecolor{backcolour}{rgb}{0.95,0.95,0.92}
\lstdefinestyle{mystyle}{
    backgroundcolor=\color{backcolour},   
    commentstyle=\color{codegreen},
    keywordstyle=\color{magenta},
    numberstyle=\tiny\color{codegray},
    stringstyle=\color{codepurple},
    basicstyle=\ttfamily\footnotesize,
    breakatwhitespace=false,         
    breaklines=true,                 
    captionpos=b,                    
    keepspaces=true,                 
    numbers=left,                    
    numbersep=5pt,                  
    showspaces=false,                
    showstringspaces=false,
    showtabs=false,                  
    tabsize=2
}
\newcommand{\ep}{\varepsilon}
\newcommand{\expected}{\mathbb{E}}
\newcommand{\bicolored}{bicolored }
\theoremstyle{plain}
\newtheorem{theorem}{Theorem}[section]
\newtheorem{lemma}[theorem]{Lemma}
\newtheorem{proposition}[theorem]{Proposition}
\newtheorem{claim}{Claim}[section]
\newtheorem{conjecture}{Conjecture}
\newtheorem{observation}{Observation}
\newtheorem{corollary}[theorem]{Corollary}
\newtheorem{definition}[theorem]{Definition}
\begin{document}
\maketitle

\begin{abstract}
    We study Tur\'an and Ramsey-type problems on edge-colored graphs. An edge-colored graph is called {\em $\ep$-balanced} if each color class contains at least an $\ep$-proportion of its edges. Given a family $\mathcal{F}$ of edge-colored graphs, the Ramsey function $R(\ep, \mathcal{F})$ is the smallest $n$ for which any $\ep$-balanced $K_n$ must contain a copy of an $F\in\mathcal{F}$, and the Tur\'an function $\textup{ex}(\ep, n, \mathcal{F})$ is the maximum number of edges in an $n$-vertex $\ep$-balanced graph which avoids all of $\mathcal{F}$. In this paper, we consider this Tur\'an function for several classes of edge-colored graphs, we show that the Ramsey function is linear for bounded degree graphs, and we prove a theorem that gives a relationship between the two parameters.

\end{abstract}

\section{Introduction}
 The central parameter in (graph) Ramsey theory is the \textit{Ramsey number} $R(k)$, quantifying the smallest integer $n$ for which every $2$-edge-coloring of the complete graph $K_n$ contains a monochromatic clique on $k$ vertices. It is known that $2^{k/2+o(1)}\leq R(k)\leq 2^{2k+o(1)}$, but the constants that appear in the exponents have resisted improvements for decades. For an overview, we refer the reader to the survey by Conlon, Fox, and Sudakov \cite{CFS}. We will refer to a graph endowed with a red-blue edge-coloring as a {\em \bicolored graph}. 
 \par Recently, there has been interest in finding non-monochromatic patterns in two-colorings of $K_n$. Of course, to do so, one needs to assume that both color classes are sufficiently well-represented. Bollob\'as thus asked (see \cite{CM}) which non-monochromatic subgraphs can be guaranteed in a two-coloring of $K_n$ where both color classes have at least $\ep\cdot e(K_n)$ edges. A \bicolored graph $G$ where both color classes have at least $\ep \cdot e(G)$ edges will be henceforth called {\em $\ep$-balanced}. Call a \bicolored graph $K_{2t}$ an \textit{unavoidable} $t$-graph if one of the color classes is a clique of size $t$ (Type 1), or two disjoint cliques of size $t$ (Type 2). As the coloring of $K_n$ can itself be unavoidable, subgraphs of unavoidable graphs are the only non-monochromatic subgraphs we can hope to find. Bollob\'as conjectured \cite{CM} that any large enough $\ep$-balanced complete graph would contain an unavoidable $t$-graph.
 \par Cutler and Montagh \cite{CM} confirmed Bollob\'as's conjecture. For $\ep\in (0,\frac{1}{2}]$ and $t\in\mathbb{N}$, let $R(\ep, t)\in\mathbb{N}$ be the smallest integer such that, whenever $n\geq R(\ep, t)$, every $\ep$-balanced $K_n$ contains an unavoidable $t$-graph. Sharp bounds for $R(\ep, t)$ were later obtained by Fox and Sudakov \cite{FS}, who showed that $R(\ep, t)=(1/\ep)^{\Theta(t)}$. Caro, Hansberg, and Montejano \cite{AAC} proved Bollob\'as's conjecture with even weaker hypotheses, letting $\ep \to 0$. They additionally studied the minimum density of each color class in a \bicolored $K_n$ required for finding forbidden \bicolored graphs with a fixed number of red edges, giving the problem a Tur\'an-type flavor. Subsequently, similar problems were studied in \cite{colorfulpaths, DHV, GN} (see also \cite{recent}). Multi-color and infinite variants of the problem were studied in \cite{BLM}. In \cite{AAC}, the connection between this problem and zero-sum labelings of graphs was explored (see also \cite{zero1}).
 
 \par In this paper, we continue the trend from \cite{AAC} and consider an extremal version of Bollob\'as's problem. The celebrated theorem of Tur\'an states that any graph $G$ with more than $\left(1-\frac{1}{k-1}\right)\binom{v(G)}{2}$ edges necessarily contains a $k$-clique. By analogy with Bollob\'as's problem and with Tur\'an's theorem, we ask which non-monochromatic subgraphs can be guaranteed in a dense enough $\ep$-balanced graph. It is clear as before that subgraphs of unavoidable graphs are the only patterns that we can hope to find. Our first theorem says that in fact unavoidable graphs can be found in dense enough $\ep$-balanced graphs, and the required density is closely related to the corresponding Ramsey function.
 \begin{theorem}\label{thm:mainextremal}
  Let $\ep\in(0,1/2]$ and $t\in \mathbb{N}$. For any $0< \ep'<\ep$ there exists a constant $K=K(\ep, \ep')$ such that any $\ep$-balanced graph $G$ with $e(G)\geq \left(1-\frac{1}{K\cdot R(\ep',t)}\right)\binom{v(G)}{2}$ edges (and sufficiently many vertices) contains an unavoidable $t$-graph.
 \end{theorem}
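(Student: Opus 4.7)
The plan is to find, inside $G$, a clique of size $s := R(\ep', t)$ whose induced 2-coloring is $\ep'$-balanced; by the definition of $R(\ep',t)$ this clique then contains an unavoidable $t$-graph, and since every edge inside a clique of $G$ is a genuine edge, this $t$-graph sits inside $G$ itself. The entire argument is a probabilistic clique search which pins down both properties simultaneously, aided by a preliminary cleaning step.

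First, I would clean $G$. Write $\alpha := 1/(K R(\ep',t))$, so $G$ has at most $\alpha \binom{n}{2}$ non-edges, hence $\sum_v \textup{nondeg}_G(v) \leq \alpha n^2$. Let $\beta := 4n/(K R(\ep',t)(\ep-\ep'))$ and $B := \{v : \textup{nondeg}_G(v) > \beta\}$; the degree-sum inequality yields $|B| \leq (\ep-\ep')n/4$. Setting $V' := V \setminus B$ and $G' := G[V']$, the number of edges of $G$ incident to $B$ is at most $|B|\cdot n$, which for $K = K(\ep,\ep')$ large enough is small enough that both color classes of $G'$ still contain at least $\ep'' \binom{|V'|}{2}$ edges, where $\ep'' := (\ep+\ep')/2$. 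By construction, every vertex of $G'$ has non-degree at most $\beta$ in $G'$.

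Next, I would sample $Q$ uniformly from the $s$-subsets of $V'$ and separately bound the two relevant events. For the clique event, Janson's inequality applied to the bad events $A_e := \{e \subseteq Q\}$ (indexed by non-edges $e$ of $G'$) gives $\Pr[Q \text{ is a clique in } G'] \geq \exp(-\mu-\Delta)$, where the first moment $\mu$ and the dependency sum $\Delta$ of Janson satisfy
\[
\mu = |\bar E(G')| \binom{s}{2}/\binom{|V'|}{2} = O(s/K), \qquad \Delta = O\!\left(\tfrac{s^3}{|V'|^3}\right) \sum_{v \in V'} \binom{\textup{nondeg}_{G'}(v)}{2} = O\!\left(\tfrac{s}{K^2(\ep-\ep')}\right),
\]
using the max-nondegree bound $\beta$ from the cleaning step in the last estimate. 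Thus $\Pr[Q \text{ is a clique}] \geq \exp(-Cs/K)$ for a constant $C=C(\ep,\ep')$. For the balance event, the $\ep''$-balance of $G'$ gives $\expected r(Q), \expected b(Q) \geq \ep''\binom{s}{2}$, and a bounded-differences martingale (swapping one vertex of $Q$ changes $r(Q)$ by at most $s-1$) together with Azuma's inequality yields
\[
\Pr[Q \text{ is not } \ep'\text{-balanced}] \leq 2\exp(-c(\ep-\ep')^2 s)
\]
for an absolute $c>0$. Choosing $K=K(\ep,\ep')$ so that $C/K < c(\ep-\ep')^2$, the first lower bound exceeds the second upper bound for all $s$ large, so there is a choice of $Q$ simultaneously a clique in $G$ and $\ep'$-balanced. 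Invoking the definition of $R(\ep',t)$ on $G[Q]$ produces the desired unavoidable $t$-graph.

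The main obstacle is keeping $K$ independent of $t$, i.e., of $s$. Without the cleaning step, the dependency sum $\Delta$ in Janson's inequality can blow up to $\Omega(s^2/K)$ if the non-edges of $G$ are concentrated on a few high-nondegree vertices, and this would force $K$ to grow with $s$, invalidating the order of quantifiers in the theorem statement. The cleaning step is chosen precisely to cap the non-degrees at $\beta = O(n/(K R(\ep',t)))$, which brings $\Delta$ down to $O(s/K^2)$; then both the Janson and Azuma exponents scale linearly in $s$, and the ratio depends only on $\ep$ and $\ep'$. Verifying that the cleaning loses few enough edges of each color to maintain the slack $\ep''>\ep'$ is the other delicate calculation, and is exactly where the factor $(\ep-\ep')$ enters the definition of $\beta$.
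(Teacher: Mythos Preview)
Your central claim---that Janson's inequality applied to the bad events $A_e=\{e\subseteq Q\}$ yields $\Pr[Q\text{ is a clique}]\ge\exp(-\mu-\Delta)$---is not correct, and this is where the argument breaks. Janson's inequality is an \emph{upper} bound, $\Pr[\bigcap\bar A_e]\le\exp(-\mu+\Delta/2)$; the companion \emph{lower} bound is the Harris/FKG bound $\Pr[\bigcap\bar A_e]\ge\prod_e(1-\Pr[A_e])\approx\exp(-\mu)$, which does not involve $\Delta$ at all. So your entire discussion of controlling $\Delta$ via the cleaning step is aimed at a quantity that never enters the relevant inequality. Worse, you sample $Q$ as a uniform $s$-subset rather than by independent coin flips, and for that model even Harris/FKG is unavailable: the events $\{e\subseteq Q\}$ for non-edges $e$ are positively correlated when the non-edges share a vertex and negatively correlated when they are disjoint, so no monotone-correlation inequality applies. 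Thus the claimed lower bound $\exp(-O(s/K))$ on the clique probability is unjustified as written, and without it the comparison with the Azuma bound collapses.

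The paper avoids these correlation issues by a different mechanism. It samples with independent Bernoulli$(p)$ coins, deliberately \emph{oversampling} (expected size a constant multiple of the target), and then deterministically deletes both endpoints of every non-adjacent pair in the sample; this produces a genuine clique $T$ by construction. The size of $T$ is controlled by a first-moment (Markov) bound on the number of deletions together with Chernoff on $|S|$, and the colour balance of $T$ is handled by a direct second-moment (Chebyshev) computation on $e_B(T)$. Every bad event has probability below $1/5$, so a union bound finishes. Your Azuma bound for balance and your cleaning step are fine ideas in isolation, and in fact a greedy count of $s$-cliques using your max-non-degree cap $\beta$ \emph{would} give $\Pr[Q\text{ clique}]\ge\exp(-O(s/K))$ for the uniform model---but that is a different argument from the one you wrote, and it is that step, not the control of $\Delta$, that makes the cleaning pay off.
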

 On the other hand, one can construct a \bicolored graph without an unavoidable $t$-graph with density $\left(1-\frac{1}{R(\ep,t)-1}\right)$ as follows: Start with a $\ep$-balanced complete graph on $R(\ep,t)-1$ vertices that contains no unavoidable $t$-graphs, and replace each vertex by equally-sized independent sets, and each edge by a complete bipartite graph (of the same color). This shows that the asymptotics of this extremal problem is closely related to the Ramsey function $R(\ep,t)$. We leave as an open problem to determine whether a closer relationship can be established by eliminating the dependence of $K$ on $\ep$ and $\ep'$. 
 \vspace{2mm}
 \par Next we consider forbidding graphs other than the complete graph, and we see that our extremal theorem has an interesting application in the Ramsey setting. We call a \bicolored graph \textit{inevitable} if for large enough $t$ it can be embedded in both a Type 1 $t$-graph and Type 2 $t$-graph. For example, graphs where one color class form a star which spans all the vertices, and cycles whose coloring alternates between two red edges followed by two blue edges are inevitable. (We characterize inevitable graphs in Proposition \ref{inevitable characterization}.) If $H$ is an inevitable graph, $R(\ep, H)$ denotes the least integer such that every $\ep$-balanced clique on at least $R(\ep,H)$ vertices contains $H$ as a subgraph. The function $R(\ep, H)$ was studied in \cite{BLM} for dense $H$; it was shown that in this case $R(\ep,H)$ grows exponentially with~$v(H)$. For sparse $H$, one would expect $R(\ep, H)$ grow much slower. In the context of monochromatic graphs, Chvatal--Rödl--Szemer\'edi--Trotter showed \cite{CRST} that bounded degree graphs have linear Ramsey functions. Using Theorem 1.1, we obtain the colorful analogue of their result:
 \begin{theorem}\label{thm:linearramsey}
 For any $\ep>0$ and $\Delta$ there exists a constant $C=C(\ep, \Delta)$ such that for any inevitable $H$ with maximum degree $\Delta$, $R(\ep, H)\leq C\cdot v(H)$.
 \end{theorem}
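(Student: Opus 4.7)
My plan is to combine Szemer\'edi's Regularity Lemma, Theorem~\ref{thm:mainextremal}, and the standard embedding lemma for bounded-degree graphs in $\eta$-regular pairs. Fix $\ep>0$ and $\Delta$, and let $H$ be inevitable with $v(H)=m$ and $\Delta(H)\leq\Delta$. The strategy is to locate a small unavoidable $t_1$-graph in the reduced coloring of $K_n$ with $t_1=t_1(\Delta)$ independent of $m$, and then to embed $H$ into its blow-up; it is the boundedness of $t_1$ in $\Delta$ alone that forces linearity in $m$ rather than exponential dependence.

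The first ingredient is that every inevitable $H$ with $\Delta(H)\leq\Delta$ admits, for each type $i\in\{1,2\}$, a color-preserving homomorphism $\phi_i\colon H\to T_i$ into an unavoidable $(\Delta+1)$-graph $T_i$ of Type~$i$. In the Type~$1$ case, inevitability yields for large $t$ an injective embedding of $H$ into a Type~1 $t$-graph, and the preimage $S\subseteq V(H)$ of the red clique then satisfies that every red edge of $H$ lies inside $S$, that $S$ is independent in the blue subgraph $H_B$, and that $V(H)\setminus S$ contains no red edges. Since both $H_R[S]$ and $H_B[V(H)\setminus S]$ have maximum degree at most $\Delta$, each is properly $(\Delta+1)$-colorable; mapping the resulting color classes into the red clique and the blue part of a Type~1 $(\Delta+1)$-graph furnishes $\phi_1$. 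An analogous construction using the bipartition from Proposition~\ref{inevitable characterization} produces $\phi_2$.

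Next, apply the Regularity Lemma to the 2-colored $K_n$ with parameter $\eta=\eta(\ep,\Delta)$ to be chosen below, obtaining a partition $V_1,\ldots,V_k$ with $|V_i|\approx n/k$ and at most $\eta\binom{k}{2}$ irregular pairs. Classify each regular pair by its red density $d^R$: \emph{red-heavy} if $d^R\geq 1-\ep/4$, \emph{blue-heavy} if $d^R\leq \ep/4$, and \emph{mixed} otherwise. A simple averaging argument using the $\ep$-balance of $K_n$ shows that the red-usable pairs (red-heavy plus mixed) and blue-usable pairs (blue-heavy plus mixed) each comprise at least $(3\ep/4)\binom{k}{2}$ of all pairs, so the mixed pairs can be distributed between the two color classes to produce a 2-colored subgraph $\widetilde R$ of $K_k$ that omits only the irregular pairs, is $(\ep/4)$-balanced, and has every red edge arising from a regular pair with $d^R\geq\ep/4$ (and likewise for blue). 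Choosing $\eta$ small so that both $e(\widetilde R)\geq\bigl(1-1/(K\cdot R(\ep/4,\Delta+1))\bigr)\binom{k}{2}$ and $k\geq R(\ep/4,\Delta+1)$ hold, Theorem~\ref{thm:mainextremal} applied to $\widetilde R$ yields an unavoidable $(\Delta+1)$-graph $T'$ of some type $i\in\{1,2\}$.

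The graph $T'$ designates $2(\Delta+1)$ parts $V_{j_1},\ldots,V_{j_{2(\Delta+1)}}$ in $K_n$ for which every pair is $\eta$-regular with density at least $\ep/4$ in the color prescribed by $T'$. Using the homomorphism $\phi_i$ corresponding to the type of $T'$, the standard embedding lemma for bounded-degree graphs in regular tuples embeds $H$ into $V_{j_1}\cup\cdots\cup V_{j_{2(\Delta+1)}}$, provided each part has size at least a suitable $C(\Delta,\ep)\cdot m$, which is guaranteed by $n\geq C'(\ep,\Delta)\cdot m$. The main obstacle is the first ingredient: the definition of inevitable only supplies an \emph{injective} embedding, requiring $t\geq m/2$ in the reduced structure and hence $k\geq R(\ep/4,\Omega(m))=(1/\ep)^{\Omega(m)}$ parts, which is exponential in $m$ and would ruin the linear bound. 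Exploiting the bounded-degree hypothesis to collapse this injective embedding into a homomorphism of constant target size by properly coloring the red and blue subgraphs of $H$ is what makes the linear Ramsey bound attainable.
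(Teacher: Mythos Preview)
Your proof is correct and follows essentially the same route as the paper's: apply regularity to the $\ep$-balanced $K_n$, produce an $(\ep/4)$-balanced cluster coloring in which each colored super-edge reflects density at least $\ep/4$ of that color, invoke Theorem~\ref{thm:mainextremal} to find an unavoidable $(\Delta+1)$-graph in the cluster graph, and then embed $H$ via the standard embedding lemma using the fact that $H$ admits a color-preserving homomorphism into a $(\Delta+1)$-unavoidable graph of the relevant type.

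The only substantive difference is in how the balanced cluster coloring is obtained. The paper uses the probabilistic Lemma~\ref{lem:clusterlemma} (random assignment with Chernoff), whereas you give a deterministic argument: classify regular pairs as red-heavy, blue-heavy or mixed, and then distribute the mixed pairs by an intermediate-value argument. Your argument is slightly cleaner here and avoids the extra lemma. You also isolate the homomorphism step up front, while the paper performs the equivalent step (properly $(\Delta+1)$-coloring each side of the Type~1/Type~2 bipartition of $H$) inline during the greedy embedding; this is purely presentational. One tiny imprecision: when you invoke Theorem~\ref{thm:mainextremal} you should take $\ep'<\ep/4$ rather than $\ep'=\ep/4$, but this does not affect the argument.
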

 Lastly, we consider an extremal analogue of the $R(\ep, H)$ function. For $\mathcal{F}$ a family of \bicolored graphs, we call $\mathcal{F}$ {\em color-consistent} if it is unchanged by a permutation of the colors.
 \begin{definition}
     Let $\mathcal{F}$ be a color-consistent family of \bicolored graphs. We define $\mathrm{ex}(\ep, n, \mathcal{F})$ as the maximum integer $M$ such there exists a $n$-vertex $\ep$-balanced graph $G$ with $e(G)\geq M$ such that $G$ avoids all $F\in\mathcal{F}$. If $H$ is a single \bicolored graph, $\mathrm{ex}(\ep, n, H)$ denotes $\mathrm{ex}(\ep, n, \{H\})$. Finally, we let $\mathrm{ex}(\ep, \mathcal{F}):=\lim_{n\to\infty}\mathrm{ex}(\ep,n, \mathcal{F})/\binom{n}{2}$.
 \end{definition}
We prove basic properties of this function in Section \ref{extremal section}, such as the existence of the limit $\mathrm{ex}(\ep, \mathcal{F})$. We generalize the celebrated Erd\H{o}s-Stone theorem to our setting, which allows us to extend the following result of DeVos, McDonald, and Montejano (rephrased in our language).
 \begin{theorem}[DeVos--McDonald--Montejano \cite{DMM}]
 Let $T_1$ and $T_2$ be the \bicolored triangles that are not monochromatic. Then, $\mathrm{ex}(\frac{1}{2}, \{T_1, T_2\})=2/3$.
 \end{theorem}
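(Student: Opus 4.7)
The plan is to establish matching lower and upper bounds of $2/3$ via an explicit construction and an appeal to the generalized Erd\H{o}s--Stone theorem from Section~\ref{extremal section}.

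For the lower bound, I would give the following 3-partite construction. Partition $n = 3m$ vertices into three equal parts $V_1, V_2, V_3$. Include a red edge between every pair inside $V_1$ and every pair in $V_1 \times V_3$; include a blue edge between every pair inside $V_2$ and every pair in $V_2 \times V_3$; include no edge between pairs in $V_1 \times V_2$ or inside $V_3$. The resulting graph has $\binom{m}{2} + m^2$ red edges and the same number of blue ones, giving exact $\frac{1}{2}$-balance and roughly $n^2/3$ total edges, hence asymptotic density $2/3$. Every edge lies either in the all-red subgraph induced on $V_1 \cup V_3$ or the all-blue subgraph induced on $V_2 \cup V_3$, so every triangle of $G$ is monochromatic and neither $T_1$ nor $T_2$ occurs.

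For the upper bound, I would apply the \bicolored Erd\H{o}s--Stone theorem established in Section~\ref{extremal section}. In analogy with the classical result, it should imply that $\mathrm{ex}(\ep, \mathcal{F})$ equals the supremum density of an $\ep$-balanced blow-up of a \bicolored template (each template slot being red, blue, or ``missing'') that avoids every $F \in \mathcal{F}$. For $\mathcal{F} = \{T_1, T_2\}$ and $\ep = 1/2$, the construction above provides the lower bound $2/3$. The matching upper bound reduces to two small verifications: first, that no two-part template achieves balanced density greater than $1/2$ while avoiding $T_1, T_2$ (a finite case analysis over the three template slots, each in $\{R, B, M\}$, shows that if the between slot is missing then balance caps density at $1/2$, while if it is colored then the internal slots must agree with it, forcing a monochromatic blow-up), and second, that no template with four or more parts exceeds $2/3$ (which follows by a standard symmetrization step reducing any extremal template to the three-part configuration above).

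The main obstacle is establishing the generalized Erd\H{o}s--Stone theorem itself: one presumably adapts Szemer\'edi's regularity lemma and the standard supersaturation machinery to simultaneously track edge density and color balance, so that extremal $\ep$-balanced graphs avoiding $\mathcal{F}$ are asymptotically blow-ups of optimal templates. Given that tool, the deduction of $\mathrm{ex}(\frac{1}{2}, \{T_1, T_2\}) = 2/3$ reduces to the construction above plus the short template case analysis.
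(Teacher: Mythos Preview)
The paper does not prove this statement; it is quoted as a result of DeVos, McDonald, and Montejano and cited to \cite{DMM}. The paper only describes the lower-bound construction (in the proof of Theorem~\ref{cycle theorem}, Case~1) and otherwise invokes the equality as a black box. So there is no ``paper's own proof'' to compare against, and your proposal must stand on its own merits.

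Your lower bound is correct and is exactly the DMM construction the paper references. The upper bound, however, has a genuine gap. You appeal to the colored Erd\H{o}s--Stone theorem of Section~\ref{extremal section} (Theorem~\ref{ES theorem}), but that theorem takes $c=\mathrm{ex}(\ep,\mathcal{F})$ as \emph{input}: it asserts that density $c+\delta$ forces a $t$-blow-up of some $F\in\mathcal{F}$. It does not compute $c$, and in particular it does not say that $c$ equals the supremum of $\ep$-balanced densities over any finite ``template'' family. The classical Erd\H{o}s--Stone--Simonovits formula $\mathrm{ex}(n,H)=(1-1/(\chi(H)-1)+o(1))\binom{n}{2}$ works because the uncolored theorem is proved for complete multipartite $K_r(t)$ directly, and then one reads off the answer for general $H$ via $\chi(H)$. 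The paper's colored version has no such structural anchor; indeed, the Discussion section says explicitly that the authors ``were not able to define a chromatic number-like parameter'' in this setting. So the reduction you sketch --- from the upper bound to a finite case analysis over three-slot templates plus a ``standard symmetrization'' for larger templates --- is not supplied by Theorem~\ref{ES theorem}, and the symmetrization step itself is far from standard once colors and the balance constraint are in play.

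To actually prove the upper bound $2/3$ you would need a direct argument on $\tfrac{1}{2}$-balanced graphs with no non-monochromatic triangle, along the lines of what DeVos--McDonald--Montejano do in \cite{DMM}; the regularity/Erd\H{o}s--Stone machinery of this paper is designed to transfer known values of $\mathrm{ex}(\ep,\cdot)$ to blow-ups, not to establish those values in the first place.
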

 Thus, a density of $1/3$ in both color classes is required to ensure the existence of a non-monochromatic triangle. We extend this result to cycles of any fixed size:
 \begin{theorem}\label{cycle prop}
   Let $\mathcal{C}_k$ denote the family of all non-monochromatic \bicolored $k$-cycles. Then, if $k\geq 4$, $\mathrm{ex}(\frac{1}{2}, \mathcal{C}_k)=1/2$.
 \end{theorem}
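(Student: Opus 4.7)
The lower bound $\mathrm{ex}(\frac{1}{2},\mathcal{C}_k)\geq \frac{1}{2}$ is witnessed by taking two vertex-disjoint cliques on $\lfloor n/2\rfloor$ vertices, one entirely red and one entirely blue; this graph is $\frac{1}{2}$-balanced, has $(1+o(1))\frac{1}{2}\binom{n}{2}$ edges, and, being a disjoint union of monochromatic components, contains no non-monochromatic cycle of any length, so in particular no member of $\mathcal{C}_k$.

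For the matching upper bound, fix $\delta>0$, let $G$ be a $\frac{1}{2}$-balanced graph on $n$ vertices with $e(G)\geq (\frac{1}{2}+\delta)\binom{n}{2}$, and aim to exhibit a non-monochromatic $k$-cycle for $n$ sufficiently large. The plan is a two-step argument: first, locate a vertex $v$ with a red neighbor $u$ and a blue neighbor $w$; then close the 2-path $u\text{-}v\text{-}w$ into a $C_k$ by exhibiting a $u$-$w$ path of length $k-2$ avoiding $v$. The resulting cycle automatically contains both the red edge $uv$ and the blue edge $vw$, so it belongs to $\mathcal{C}_k$.

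For Step 1, if no such $v$ exists then $V(G)$ splits into a red-only set $V_R$ and a blue-only set $V_B$ with no cross edges. The hypothesis $e(R)=e(B)=e(G)/2$ together with $e(R)\leq\binom{|V_R|}{2}$ and $e(B)\leq\binom{|V_B|}{2}$ yields $e(G)\leq 2\min\bigl(\binom{|V_R|}{2},\binom{|V_B|}{2}\bigr)\leq 2\binom{\lfloor n/2\rfloor}{2}<(\frac{1}{2}+\delta)\binom{n}{2}$ for $n$ large, a contradiction. For Step 2, I would invoke Szemer\'edi's regularity lemma applied to the two color classes simultaneously: on the resulting reduced graph $\hat{G}$, which has density exceeding $\frac{1}{2}$ and inherits an approximately $\frac{1}{2}$-balanced coloring, the Step 1 argument itself locates reduced parts $V_i, V_j, V_\ell$ with $V_iV_j$ red-dominant and $V_jV_\ell$ blue-dominant. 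Iterating the common-neighbor bound $|N(x)\cap N(y)|\geq d(x)+d(y)-|V(\hat{G})|$ inside $\hat{G}-V_j$ (which, after the standard cleanup, has minimum degree above $\frac{1}{2}|V(\hat{G})|$) produces a $V_i$-$V_\ell$ path of length $k-2$ in $\hat{G}-V_j$, which closes into a reduced $k$-cycle using both colors; the colored regularity embedding lemma then lifts this to a non-monochromatic $C_k$ in $G$.

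The main technical obstacle will be coordinating the embedding so that the two edges incident to the switching vertex in $G$ come out in the prescribed colors. This is why one must work with the regularity of $R$ and $B$ individually, so that within any regular pair whose red (resp.\ blue) density is nontrivial, a red (resp.\ blue) edge can be pinned down through any typical endpoint, rather than with the regularity of the union $G$ alone; a standard application of the two-color regularity lemma makes this step routine.
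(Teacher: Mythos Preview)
Your lower bound and Step~1 are fine, and the same construction and splitting argument appear in the paper. The gap is in Step~2.

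You assert that after ``standard cleanup'' the reduced graph $\hat G$ minus one vertex has minimum degree exceeding $\tfrac12|V(\hat G)|$. Regularity cleanup gives you a reduced graph of \emph{density} above $\tfrac12+\delta'$; it gives no minimum-degree control whatsoever. One can of course iteratively delete low-degree vertices from $\hat G$ to force a minimum-degree condition, but then you have no control over whether the two-colour vertex $V_j$ --- or indeed an entire colour class --- survives the pruning. Without the minimum-degree hypothesis your ``iterated common-neighbour'' step has no force: nothing prevents $V_i$ or $V_\ell$ from having degree $1$ in $\hat G$, in which case $\hat G-V_j$ may simply disconnect them. More fundamentally, your reduction is circular: the reduced graph $\hat G$ is again an (approximately) $\tfrac12$-balanced graph of density exceeding $\tfrac12$, so ``find a non-monochromatic $C_k$ in $\hat G$'' is exactly the problem you started with, only on fewer vertices. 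A colored path $P_2$ alone is too weak a seed: its blow-up is bipartite, so for odd $k$ it contains no $C_k$ at all, and for even $k$ you still need to close the path through the rest of $\hat G$, which is where the unjustified degree assumption enters.

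The paper breaks this circularity with a genuinely different idea. It proves a structural lemma (the ``handle lemma'') asserting that any $\tfrac12$-balanced graph of density exceeding $\tfrac12$ contains either a non-monochromatic triangle or a monochromatic triangle with a pendant edge of the opposite colour. This is the hard step, proved by a cloning/symmetrisation argument reducing to a finite optimisation. Once one has such a small fixed pattern, the coloured Erd\H os--Stone theorem yields a large blow-up of it, and a non-monochromatic $C_k$ can be embedded directly into that blow-up for every $k\ge 4$ (the triangle in the pattern supplies the odd cycles that a blown-up $P_2$ cannot). So the missing ingredient in your argument is precisely a structural result at the level of the handle lemma; the rest of your outline cannot be completed without it or an equivalent.
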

 For non-monochromatic cliques, we show that the Tur\'an density which guarantees an uncolored clique also guarantees a \bicolored clique of size differing by at most an additive constant.
 \begin{theorem}\label{clique prop}
   Let $k\geq 3$, and let $\mathcal{K}_k$ denote the family of all non-monochromatic \bicolored $k$-cliques. Then,  $\mathrm{ex}(\frac{1}{2}, \mathcal{K}_k)=1-\frac{1}{k+O(1)}$.
 \end{theorem}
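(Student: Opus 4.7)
The plan is to establish matching lower and upper bounds on $\mathrm{ex}(\tfrac{1}{2},\mathcal{K}_k)$ whose denominators agree up to an additive constant, i.e.\ both of the form $1-\tfrac{1}{k+O(1)}$.

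For the lower bound I would take the Turán graph $T_{k-1}(n)$ endowed with any $\tfrac{1}{2}$-balanced 2-coloring (for instance, splitting each bipartite slab evenly between red and blue). Since $T_{k-1}(n)$ is $(k-1)$-partite and therefore $K_k$-free, this coloring avoids $\mathcal{K}_k$ vacuously. Its edge density is $1-\tfrac{1}{k-1}=1-\tfrac{1}{k+O(1)}$, yielding $\mathrm{ex}(\tfrac{1}{2},\mathcal{K}_k) \geq 1-\tfrac{1}{k+O(1)}$. (For $k=3$ the Turán-based construction only reaches $1/2$; the sharper lower bound $2/3$ is obtained by a DeVos--McDonald--Montejano-style construction---red clique $V_1$, blue clique $V_2$, independent ``neutral'' set $V_3$, with all $V_1$-$V_3$ edges red, all $V_2$-$V_3$ edges blue, and no $V_1$-$V_2$ edges---which also fits the form $1-\tfrac{1}{k+O(1)}$.)

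For the upper bound I would fix a constant $c=c(k)$, large in terms of $k$ (of order $R(k,k)$, the two-color Ramsey number). Assume, for contradiction, that $G$ is a $\tfrac{1}{2}$-balanced graph of density exceeding $1-\tfrac{1}{k+c}$ that avoids $\mathcal{K}_k$. The colored analogue of the Erdős--Stone theorem established in Section \ref{extremal section} forces $G$ to contain a large, roughly $\tfrac{1}{2}$-balanced blow-up of some canonical bicolored structure on $k+c$ parts; by choosing a transversal, we obtain a bicolored copy $K^\ast$ of $K_{k+c}$ inside $G$. A short Ramsey-type observation then finishes the proof: as soon as $K^\ast$ carries both colors, pick a red edge $xy$ and a blue edge $uv$; in the vertex-disjoint case $\{x,y,u,v\}$ is already a non-monochromatic $K_4$, and any $k-4$ further vertices of $K^\ast$ extend it to a non-mono $K_k$, while the overlapping case is handled by a brief case analysis.

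The main obstacle is the middle step: one has to ensure that the blow-up extracted by the colored Erdős--Stone step is \emph{genuinely bicolored}, since a priori high density could only force a large monochromatic Turán-like substructure. To rule this out I would leverage the global $\tfrac{1}{2}$-balance of $G$: the ``missing'' color must appear in bulk somewhere in $G$, and a Ramsey-type counting argument, using the size $c$ of the overhead, shows it must also appear in quantity inside the extracted blow-up. This buffering is what produces the $O(1)$ slack in the denominator and is the step whose precise quantitative form I expect to require the most care.
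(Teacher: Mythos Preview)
Your lower bound is fine, but the upper bound has a genuine gap, and in fact the parameters you propose do not yield the statement.

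First, you set $c=c(k)$ ``of order $R(k,k)$''. Since $R(k,k)$ grows exponentially in $k$, the bound $1-\tfrac{1}{k+c}$ you would obtain is of the form $1-2^{-\Theta(k)}$, not $1-\tfrac{1}{k+O(1)}$. The whole content of the theorem is that the additive slack in the denominator is bounded \emph{uniformly in $k$}, so a $c$ that depends on $k$ (let alone exponentially) does not prove the result.

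Second, your ``middle step'' is exactly the heart of the matter and you have not supplied an argument for it. The colored Erd\H{o}s--Stone theorem in the paper (Theorem~\ref{ES theorem}) takes as input a family $\mathcal{F}$ together with its value $\mathrm{ex}(\ep,\mathcal{F})$ and outputs a blow-up of some $F\in\mathcal{F}$; it does not produce a ``$\tfrac12$-balanced blow-up of a canonical structure on $k+c$ parts'' from density alone. Applying it to $\mathcal{K}_k$ is circular, and applying it to a smaller family (e.g.\ non-monochromatic triangles) gives a blow-up that is $3$-partite and hence $K_4$-free. Plain Tur\'an gives you a $K_{k+c}$, but nothing forces that particular clique to carry both colors, and the ``Ramsey-type counting argument'' you allude to is not spelled out and, as far as I can see, would again cost a factor depending on $k$. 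Similarly, Lemma~\ref{lem:mainlemma} only returns an $\ep'$-balanced clique of size $k/C$, so using it would introduce a multiplicative (not additive) loss.

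The paper's proof is quite different and worth internalizing. It adapts the Alon--Spencer random-ordering proof of Tur\'an's theorem: take a uniformly random ordering of $V(G)$, let $S$ be the set of vertices that precede all of their non-neighbours (so $S$ is a clique with $\mathbb{E}|S|\gtrsim k+C$), and let $W$ be the waiting time until a non-monochromatic triangle appears among the sampled vertices. The set $T\cup(S\setminus W')$ is then a non-monochromatic clique, and since $G$ is $\mathcal{K}_k$-free one gets $\mathbb{E}|S|-\mathbb{E}[W]<k$, hence $C<\mathbb{E}[W]$. The punchline is a counting argument showing that a $\tfrac12$-balanced graph of density $>9/10$ has $\Omega(n^3)$ non-monochromatic triangles (many triangles in total by Moon--Moser, but only $O(n^3)$ monochromatic ones by Kruskal--Katona applied to each color class of density $\le 1/2$), so $\mathbb{E}[W]=O(1)$ uniformly in $k$. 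That uniform $O(1)$ is precisely what your outline is missing.
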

 
We believe Theorem \ref{clique prop} can be improved as follows.

\begin{conjecture}\label{nonmono clique conjecture}
Let $\mathcal{K}_k$ be the family of all non-monochromatic \bicolored $k$-cliques. For $k$ sufficiently large, $\mathrm{ex}(\frac{1}{2},\mathcal{K}_k) = 1-\frac{1}{k-1}$.
\end{conjecture}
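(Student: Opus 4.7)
The lower bound $\mathrm{ex}(\tfrac{1}{2},\mathcal{K}_k)\ge 1-\tfrac{1}{k-1}$ is immediate: coloring the edges of the Tur\'an graph $T_{k-1}(n)$ by any balanced $2$-coloring yields a $\tfrac{1}{2}$-balanced graph of density $1-\tfrac{1}{k-1}+o(1)$ with no $K_k$-subgraph at all, hence no non-monochromatic $K_k$.

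For the matching upper bound, suppose $G$ is a $\tfrac{1}{2}$-balanced graph of density $d\ge 1-\tfrac{1}{k-1}+\delta$ (for some fixed $\delta>0$) in which every $K_k$ is monochromatic, and let $G_R,G_B$ be the color subgraphs, each of density $d/2\le \tfrac{1}{2}$. Since every $K_k$ of $G$ lies entirely in one of $G_R$ or $G_B$, the total $K_k$-count in $G$ is at most the sum of the $K_k$-counts in $G_R$ and $G_B$; by the Kruskal--Katona theorem each summand is at most $(\tfrac{1}{2})^{k/2}\binom{n}{k}$, giving the total bound $2^{1-k/2}\binom{n}{k}$, which is tiny for $k$ large. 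A quantitative stability-above-Tur\'an argument (of Lov\'asz--Simonovits type) then forces $G$ to be within edit distance $\eta(k,\delta)\cdot n^2$ of the canonical near-extremal graph $T_{k-1}(n)$ augmented by $\delta\binom{n}{2}$ within-part ``extra'' edges, with $\eta(k,\delta)\to 0$ as $k\to\infty$.

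The remaining argument is a structural case analysis on the number $s$ of parts containing extras. For $k\ge 4$, any two $K_k$'s that each use an extra edge can be chosen to share at least one common edge (by suitable alignment of their representatives in the remaining parts and, if necessary, passing through each other's extra endpoints); hence the transitive closure of the relation ``two edges lie in a common $K_k$'' merges all such $K_k$'s edges into a single monochromatic equivalence class. When $s\ge 3$, every between-parts edge of $G$ lies in some $K_k$ (take an extra in a part avoiding the two endpoints), so the entire $G$ belongs to this monochromatic ``big class''. When $s\in\{1,2\}$, a direct count shows that the ``singleton'' edges (those in no $K_k$) make up at most an $O(1/k)$-fraction of $e(G)$. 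In every case, for $k$ large enough, the big monochromatic class contains strictly more than $e(G)/2$ edges, even after the $\eta n^2$ perturbation; hence one color class exceeds $e(G)/2$, contradicting $\tfrac{1}{2}$-balancedness.

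The main technical obstacle is the stability step: the classical Erd\H{o}s--Simonovits stability applies only to $K_k$-free graphs, whereas here $G$ merely has $K_k$-count bounded by Kruskal--Katona. One needs a quantitative stability-above-Tur\'an result converting this count bound into a vanishing edit distance $\eta(k,\delta)\to0$ from $T_{k-1}(n)$ plus extras, with $\eta$ small enough compared to $1/k$ to survive the structural analysis. Such statements appear in the supersaturation literature (cf.\ Lov\'asz--Simonovits, Pikhurko--Razborov) in various quantitative forms, but extracting the right dependence uniformly in $\delta>0$ is delicate and constitutes the core technical challenge of the conjecture.
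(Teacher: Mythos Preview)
This statement is a \emph{conjecture} in the paper, not a theorem: the authors leave it open and in fact point out in the Discussion that the equality fails for $k\in\{3,4\}$ (giving explicit constructions exceeding $1-\tfrac{1}{k-1}$), which is part of why they expect a proof to be delicate. So there is no ``paper's own proof'' to compare against; the question is only whether your sketch is a valid proof.

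Your lower bound is fine. The upper bound, however, has at least two genuine gaps. The first you already identify: you need a stability statement of the form ``density $1-\tfrac{1}{k-1}+\delta$ together with at most $2^{1-k/2}\binom{n}{k}$ copies of $K_k$ forces edit distance $\eta(k,\delta)n^2$ from $T_{k-1}(n)$ plus extras, with $\eta\to 0$ uniformly in $\delta$ as $k\to\infty$.'' No such result is cited, and the standard Lov\'asz--Simonovits supersaturation bound produces only on the order of $\delta k^{O(1)}e^{-k}\binom{n}{k}$ cliques in such a graph, which is far smaller than your Kruskal--Katona ceiling $2^{1-k/2}\binom{n}{k}$; so there is no tension to exploit, and I do not see how to extract the needed $\eta$ from existing supersaturation/stability technology.

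The second gap is in the structural step, which does not obviously survive even an $o(n^2)$ perturbation. Your claim that for $s\ge 3$ ``every between-parts edge lies in some $K_k$'' relies on finding, for a given cross edge, a full transversal clique through a specified extra edge---but after deleting $\eta n^2$ edges such transversals need not exist for every cross edge, and you have not argued that the exceptional set is negligible. Likewise, the assertion that for $s\le 2$ the singleton edges form an $O(1/k)$-fraction is not justified: when $s=1$ the edges from $V_1$-vertices with no extra-neighbor to the rest of the graph are singletons, and this can be a constant fraction of $e(G)$ unless you control the degree distribution of the extras. Finally, the connectivity argument (``any two $K_k$'s using extras can be chosen to share an edge'') is asserted rather than proved, and again is fragile under edge deletion. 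Each of these would need a robust, quantitative version before the outline becomes a proof.
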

\subsection*{Organization}
In the next section, we first prove our main extremal result, Theorem \ref{thm:mainextremal}, and then we show its application in the Ramsey setting, Theorem \ref{thm:linearramsey}, in Subsection \ref{linear ramsey}. Afterwards, in Section \ref{extremal section} we start studying the function $\mathrm{ex}(\ep,\mathcal{F})$. After proving some basic properties, we characterize when $\mathrm{ex}(\ep,\mathcal{F})<1$, and prove a multi-color version of the Erd\H{o}s--Stone theorem. In Section \ref{nonmono section}, we use this theory to prove Theorems \ref{cycle prop} and \ref{clique prop}.  

For $f,g:\mathbb{N}\to \mathbb{N}$, we use $f\ll g$ to mean that $f=o(g)$ and $f\lesssim g$ to mean $f \leq (1+o(1))g$. For a \bicolored graph $G$, we will use subscripts $R$ and $B$ to denote graph invariants restricted to the red or blue edges respectively, e.g. $E_B(G)$ will denote the set of blue edges in $G$ and $d_R(v)$ will denote the red degree of a vertex $v$.
\section{Balanced cliques in balanced graphs}
The proof of Theorem \ref{thm:mainextremal} is almost immediate after the following lemma.
\begin{lemma}\label{lem:mainlemma}
For any $0<\ep'<\ep\leq 1/2$, there exist a $C, K_0$ such that for all $k\geq K_0$ and $n$ sufficiently large, we have that any $\ep$-balanced $n$-vertex graph with $(1-\frac{1}{k})\binom{n}{2}$ edges contains an $\ep'$-balanced clique on at least $\frac{k}{C}$ vertices.
\end{lemma}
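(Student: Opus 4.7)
\medskip\noindent\textbf{Proof plan.} The plan is to sample a uniformly random subset $S \subseteq V(G)$ of size slightly larger than $k/C$, use concentration to show that $G[S]$ is close to a clique with the expected amount of red and blue edges, and then delete a few vertices to turn $S$ into an honest clique without destroying the balance.

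Fix $\delta = (\ep - \ep')/3$, let $C_1 = C_1(\ep, \ep')$ be a large constant to be chosen at the end, set $C = 2C_1$, and put $m' = k/C_1$. For a uniformly random $m'$-subset $S \subseteq V(G)$, linearity of expectation gives
\[
\expected[e_R(G[S])], \; \expected[e_B(G[S])] \geq \ep\binom{m'}{2}, \qquad \expected\bigl[\#\{\text{non-edges of } G \text{ in } S\}\bigr] = \tfrac{1}{k}\binom{m'}{2}.
\]
A standard second-moment calculation, using that the indicators $\mathbb{1}[e\subseteq S]$ have nonzero covariance only when $e$ and $e'$ share a vertex, yields $\operatorname{Var}(e_R(G[S])), \operatorname{Var}(e_B(G[S])) = O(m'^3)$. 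Hence for $k \geq K_0$ (so $m'$ is large in terms of $\delta$) and $n$ large compared to $m'$, Chebyshev's inequality produces a subset $S$ with $e_R(G[S]), e_B(G[S]) \geq (\ep - \delta)\binom{m'}{2}$, while Markov's inequality on the non-edge count simultaneously gives at most $2\binom{m'}{2}/k$ non-edges of $G$ inside $S$.

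Given such an $S$, delete one endpoint of each non-edge to obtain $T \subseteq S$; then $G[T]$ is a clique of size
\[
|T| \geq m' - 2\binom{m'}{2}/k \geq m'(1 - 1/C_1) \geq \tfrac{k}{2C_1} = \tfrac{k}{C}.
\]
The edges lost during the deletion total at most $(2\binom{m'}{2}/k)\cdot(m'-1) = O(\binom{m'}{2}/C_1)$, a lower-order quantity. Dividing by $\binom{|T|}{2} \leq \binom{m'}{2}$, both the red and blue densities in $G[T]$ are at least $(\ep - \delta) - O(1/C_1)$, which exceeds $\ep'$ once $C_1$ is chosen large enough that $O(1/C_1) \leq \delta$. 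This gives the desired $\ep'$-balanced clique.

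The main obstacle I expect is the second-moment step: one must verify the $O(m'^3)$ variance bound, which comes down to a routine hypergeometric calculation but requires care because the indicators $\mathbb{1}[e\subseteq S]$ are not independent. The rest of the argument is parameter bookkeeping: ensuring that the $O(k/C_1^2)$ vertex deletions remove only a $O(1/C_1)$ fraction of the edges and therefore do not destroy the $\ep'$-balance of the extracted clique.
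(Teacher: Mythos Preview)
Your proposal is correct and follows essentially the same approach as the paper: random sampling of vertices, second-moment concentration for the red and blue edge counts, Markov's inequality for the number of non-edges, then a small deletion to obtain the clique. The only differences are cosmetic --- the paper samples each vertex independently with probability $p=4k/(Cn)$ rather than taking a fixed-size subset, removes \emph{both} endpoints of every non-edge, and applies Chebyshev directly to $e_B(T)$ after the deletion rather than (as you do) to $e_B(G[S])$ beforehand with a separate accounting of the edges lost in the cleanup.
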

We do not know if in the above statement $\frac{k}{C}$ can be replaced by a quantity of the form $(1-o(1))k$. We consider this problem in the Discussion section further.

\begin{proof}[Proof of Theorem \ref{thm:mainextremal}, assuming Lemma \ref{lem:mainlemma}]
Given $\ep$ and $\ep'$, let $C$ and $K_0$ be the constants obtained by applying Lemma \ref{lem:mainlemma} with $\ep$ and $\ep'$. Choose $K:=\max\{\frac{K_0}{R(\ep', t)}, C\}$ and consider a graph $G$ with $e(G)\geq \left(1-\frac{1}{K\cdot R(\ep',t)}\right)\binom{|V|}{2}$ and $v(G)$ sufficiently large. Since by our choice, $K\cdot R(\ep, t)\geq K_0$, we have that $G$ contains an $\ep'$-balanced clique on $\frac{K\cdot R(\ep, t)}{C} \geq R(\ep', t)$ vertices. Such a subgraph by definition contains an unavoidable $t$-graph, as desired.
\end{proof}
Thus, the problem is reduced to finding the smaller clique which is almost as balanced. We do this in the following proof. Prior to the proof, we record the version of Chebyschev's inequality and the Chernoff bound that we will use (see \cite{AS}).
\begin{lemma}\label{chebyshev}[Chebyschev's inequality] Let $A$ be a random variable with finite mean and variance. Then the following inequality holds for any real $\delta>0$:
$$\mathbb{P}(|A-\mathbb{E}[A]|\geq \delta \cdot\mathbb{E}[A]) \leq \frac{Var[A]}{\delta^2\mathbb{E}(A)^2}.$$
\end{lemma}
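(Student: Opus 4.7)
The plan is to derive Chebyshev's inequality in the standard textbook way, by applying Markov's inequality to the non-negative random variable $(A-\mathbb{E}[A])^2$. First I would establish (or cite) Markov's inequality: for any non-negative random variable $X$ with finite expectation and any $a>0$,
$$\mathbb{P}(X\geq a)\leq \frac{\mathbb{E}[X]}{a}.$$
This follows from the chain $\mathbb{E}[X]\geq \mathbb{E}[X\cdot \mathds{1}_{\{X\geq a\}}]\geq a\cdot \mathbb{P}(X\geq a)$, where the first inequality uses $X\geq 0$ (so the contribution from $\{X<a\}$ is non-negative) and the second uses $X\geq a$ on the event $\{X\geq a\}$.

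Next I would set $X:=(A-\mathbb{E}[A])^2$, which is non-negative and has expectation $Var[A]$ by definition (and finite by hypothesis), and apply Markov with threshold $a:=\delta^2\mathbb{E}[A]^2$. Here we may assume $\mathbb{E}[A]\neq 0$, since otherwise the right-hand side of the claimed bound is undefined (and in the intended applications $A$ counts copies of a subgraph so $\mathbb{E}[A]>0$ anyway). Markov then yields
$$\mathbb{P}\bigl((A-\mathbb{E}[A])^2\geq \delta^2\mathbb{E}[A]^2\bigr)\leq \frac{Var[A]}{\delta^2\mathbb{E}[A]^2}.$$

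Finally, because $\delta>0$ and both quantities inside the probability are non-negative, taking positive square roots gives the event equivalence $\{(A-\mathbb{E}[A])^2\geq \delta^2\mathbb{E}[A]^2\}=\{|A-\mathbb{E}[A]|\geq \delta\,|\mathbb{E}[A]|\}$, which coincides with $\{|A-\mathbb{E}[A]|\geq \delta\cdot\mathbb{E}[A]\}$ under the assumption $\mathbb{E}[A]>0$. Substituting into the previous display gives the stated inequality. There is no real obstacle here: the result is a textbook fact, and the only care required is noting the implicit assumption $\mathbb{E}[A]\neq 0$ and the positive-square-root step that turns the squared inequality into an absolute-value inequality.
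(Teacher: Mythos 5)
Your proof is correct and is the standard textbook derivation via Markov's inequality applied to $(A-\mathbb{E}[A])^2$. The paper itself does not prove this lemma but simply cites Alon--Spencer, whose proof is exactly the one you give; you have also sensibly flagged the implicit assumption $\mathbb{E}[A]>0$, which the paper's statement glosses over but which holds in every application (the random variable is always a nonnegative count).
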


\begin{lemma}\label{chernoff}[Chernoff bound] Let $X:=\sum_{i=1}^m X_i$ where $(X_i)_{i\in[m]}$ is a sequence of independent indicator random variables with $\mathbb{P}(X_i=1)=p_i$. Let $\expected[X]=\mu$. Then, for any $0<\gamma<1$ $\mathbb{P}(|X-\mu|\geq \gamma \mu)\leq 2e^{-\mu \gamma^2/3}$.
\end{lemma}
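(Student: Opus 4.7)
The plan is to prove the Chernoff bound via the standard exponential moment (Laplace transform) method, which gives both tail estimates separately and then combines them by a union bound. The upper and lower tails are handled symmetrically, so I would organize the argument around the upper tail $\mathbb{P}(X - \mu \geq \gamma\mu)$ and then indicate the analogous computation for the lower tail.

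First I would fix a parameter $t > 0$ and apply Markov's inequality to the nonnegative random variable $e^{tX}$:
\[
\mathbb{P}(X \geq (1+\gamma)\mu) \;=\; \mathbb{P}(e^{tX} \geq e^{t(1+\gamma)\mu}) \;\leq\; e^{-t(1+\gamma)\mu}\, \mathbb{E}[e^{tX}].
\]
By independence of the $X_i$, the moment generating function factors, and for each indicator we have $\mathbb{E}[e^{tX_i}] = 1 + p_i(e^t - 1) \leq \exp(p_i(e^t - 1))$ using $1+x\leq e^x$. Multiplying gives $\mathbb{E}[e^{tX}] \leq \exp(\mu(e^t-1))$, so
\[
\mathbb{P}(X \geq (1+\gamma)\mu) \;\leq\; \exp\bigl(\mu(e^t-1) - t(1+\gamma)\mu\bigr).
\]
Optimizing in $t$ by taking $t = \ln(1+\gamma)$ yields the classical bound $\bigl(e^\gamma/(1+\gamma)^{1+\gamma}\bigr)^\mu$.

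To finish the upper tail, I would invoke the elementary calculus inequality $(1+\gamma)\ln(1+\gamma) \geq \gamma + \gamma^2/3$, valid for $0<\gamma<1$, which upon exponentiation gives $\mathbb{P}(X \geq (1+\gamma)\mu) \leq e^{-\mu\gamma^2/3}$. For the lower tail I would repeat the argument with $e^{-tX}$ in place of $e^{tX}$, leading to $\mathbb{P}(X \leq (1-\gamma)\mu) \leq \bigl(e^{-\gamma}/(1-\gamma)^{1-\gamma}\bigr)^\mu$, and then to $\mathbb{P}(X \leq (1-\gamma)\mu) \leq e^{-\mu\gamma^2/2} \leq e^{-\mu\gamma^2/3}$ via the analogous estimate $(1-\gamma)\ln(1-\gamma)\geq -\gamma+\gamma^2/2$. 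A union bound over the two events yields the stated $2e^{-\mu\gamma^2/3}$.

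The only nontrivial step is really the calculus inequality $(1+\gamma)\ln(1+\gamma) \geq \gamma + \gamma^2/3$ for $\gamma\in(0,1)$ (and its lower-tail counterpart), but this is verified routinely by comparing derivatives at $\gamma=0$. Everything else — Markov's inequality, independence factoring the MGF, and the bound $1+x\le e^x$ — is standard, so the main task is simply to present the computation cleanly; in practice, since the paper only needs this as a black-box tool, it would be acceptable to defer to the reference \cite{AS} rather than writing it out in full.
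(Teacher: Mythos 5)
Your proof is correct and is the standard exponential-moment (Cram\'er--Chernoff) argument; the paper itself does not prove this lemma but simply cites it from Alon--Spencer \cite{AS}, exactly as you note at the end. The one place that needs verifying is the elementary inequality $(1+\gamma)\ln(1+\gamma)\ge\gamma+\gamma^2/3$ on $(0,1)$: setting $f(\gamma)=(1+\gamma)\ln(1+\gamma)-\gamma-\gamma^2/3$ one has $f(0)=f'(0)=0$ and $f'(\gamma)=\ln(1+\gamma)-2\gamma/3$, which is positive throughout $(0,1]$ (it rises to a maximum at $\gamma=1/2$ and still satisfies $f'(1)=\ln 2-2/3>0$), so $f\ge0$ there; the lower-tail inequality $\gamma+(1-\gamma)\ln(1-\gamma)\ge\gamma^2/2$ is even easier since the relevant second derivative $\gamma/(1-\gamma)$ is nonnegative. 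With those checks in place the union bound gives the stated $2e^{-\mu\gamma^2/3}$, so the argument is complete.
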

\begin{proof}[Proof of Lemma \ref{lem:mainlemma}]
We will sample a subset of vertices randomly, alter it to be a clique, and show that with positive probability that it is balanced. We will choose $C$ to be a large constant and $K_0$ will be chosen after $C$ is fixed. First, we choose a subset $S$ of vertices of $G$ by picking each vertex independently with probability $p: = \frac{4k}{Cn}$. Here $n$ is assumed to be large enough that $p<1$.

For every pair of non-adjacent vertices in $S$, we remove both of them. We call the resulting set $T$. We will show that there is a positive probability that $T$ satisfies the conclusion of the statement, that is, $T$ is an $\ep'$-balanced clique on at least $\frac{k}{C}$ vertices.

The bad events which we hope to simultaneously avoid are as follows:
\begin{enumerate}[(a)]
    \item $|T|<\frac{k}{C}$, \label{1.1 size}
    \item $e_B(T)\leq \ep'\binom{|T|}{2}$, \label{1.1 blue}
    \item $e_R(T)\leq \ep'\binom{|T|}{2}$. \label{1.1 red}
\end{enumerate}

Since $T$ is a clique by construction, if we avoid conditions \eqref{1.1 size}, \eqref{1.1 blue}, and \eqref{1.1 red} then we are done. Since $G$ has total edge density $1-\frac{1}{k}$ the expected number of pairs of vertices in $S$ which are not adjacent is
\[
p^2 \frac{1}{k}\binom{n}{2}\leq \frac{8k}{C^2}.
\]
Since we will remove at most $2$ vertices for each pair of non-adjacent vertices in $S$, by Markov's inequality the probability that we remove more than $\frac{160k}{C^2}$ vertices from $S$ is at most $\frac{1}{10}$. We also have that $\mathbb{E}(|S|) = \frac{4k}{C}$ and so by the Chernoff bound we have that
\begin{equation}\label{S size}
\mathbb{P}\left( \frac{2k}{C} < |S| < \left(\frac{1}{1-\frac{1}{C}}\right) \frac{4k}{C}\right) > \frac{9}{10},
\end{equation}
as long as $k$ is a large enough constant (note again that we choose $K_0$ after $C$). For ease of notation, let $M=\left(\frac{1}{1-\frac{1}{C}}\right) \frac{4k}{C}$. Choosing $C$ large enough so that $\frac{8k}{C^2} < \frac{k}{C}$ gives that \eqref{1.1 size} occurs with probability less than $\frac{1}{5}$. 

Next we give an upper bound the probability that \eqref{1.1 blue} occurs. Let $U$ be the event that the upper bound in \eqref{S size} occurs, i.e. that $|S| < M$. Recall that $K_0$ will be chosen so that $k$ is large enough to ensure $\mathbb{P}(U) > \frac{9}{10}$. To estimate the probability that \eqref{1.1 blue} occurs, we see that
\begin{align*}
    \mathbb{P}\left(\mbox{\eqref{1.1 blue} occurs}\right) &= \mathbb{P}\left(\mbox{\eqref{1.1 blue} occurs} \wedge U\right) + \mathbb{P}\left(\mbox{\eqref{1.1 blue} occurs} \wedge U^C\right)\\
    & \leq \mathbb{P}\left( e_B(T) \leq \ep' \binom{M}{2}\right) + \mathbb{P}(U^C),
\end{align*}

where the last inequality follows because if $U$ occurs then $|T| \leq |S| \leq M$. Therefore, we have that 
\[
\mathbb{P}\left(\mbox{\eqref{1.1 blue} occurs}\right) \leq \mathbb{P}\left( e_B(T) \leq \ep' \binom{M}{2}\right) + \frac{1}{10},
\]
and so it suffices to give a good enough upper bound on 
\[
\mathbb{P}\left( e_B(T) \leq \ep' \binom{M}{2}\right).
\]

To do this, we first wish to give a lower bound on $\expected[e_B(T)]$.  Let $\overline{N}(u)$ denote the non-neighborhood of a vertex and $\overline{d_u} = |\overline{N}(u)|$. Suppose $uv$ is a blue edge and note:
\[
\mathbb{P}(uv\in T) = \mathbb{P}(uv\in S)\cdot \mathbb{P}(uv \in T|uv\in S) = p^2\cdot \prod_{w\in \overline{N}(u) \cup \overline{N}(v)}\left( 1 - \mathbb{P}(w\in S) \right)\geq p^2(1-p)^{\overline{d_u} + \overline{d_v}}.
\]
Therefore, we have that
\begin{align*}
    \expected[e_B(T)]\geq \sum_{uv\in E_B(G)} p^2\left(1-p\right)^{\overline{d_u}+\overline{d_v}}
    &\geq p^2\sum_{uv\in E_B(G)}\left(1-(\overline{d_u}+\overline{d_v})p\right)\\
&=p^2e_B(G) - p^3\sum_{v\in V(G)}\overline{d_v}\cdot d_{v,blue}\\
& \geq p^2e_B(G) - \frac{2p^3n}{k}\binom{n}{2} \\
& \geq p^2e_B(G)\left(1 - \frac{16}{\varepsilon C}\right),
    \end{align*}

where the inequalities are by linearity of expectation, Bernoulli's inequality, $d_{v, blue} \leq n$, and simplification respectively. It follows that 
\[
\frac{\mathbb{E}[e_B(T)]}{\binom{M}{2}} \geq \ep\left(1- \frac{1}{k}\right)\left(1 - \frac{16}{\ep C}\right)\left(1- \frac{1}{C}\right)^2.
\]
Thus, assuming $K_0 \geq C$, we may choose $C$ large enough with respect to $\ep$ and $\ep'$ so that
\[
\frac{\mathbb{E}[e_B(T)]}{\binom{M}{2}} (1-\delta) \geq \ep',
\]
where $\delta := 1/C^{1/2}$ is an extra factor that we will use soon when applying Chebvyshev's inequality.

 Now, we wish to estimate $Var[e_B(T)]$ in order to apply Chebyschev's inequality. Let $\mathbb{1}_{uv}$ denote the indicator random variable denoting whether the blue edge $uv$ is inside $S$. Then $e_B(S)=\sum_{uv\in E_B(G)}\mathbb{1}_{uv}$. Note that for disjoint edges $uv$ and $xy$, $\mathbb{1}_{uv}$ and $\mathbb{1}_{xy}$ are independent random variables.    \begin{align*}Var[e_B(T)]&=\expected[e_B(T)^2]-\expected[e_B(T)]^2\leq \expected[e_B(S)^2]-\expected[e_B(T)]^2\\
&\leq \left(\sum_{|uv\cap xy|=0}\expected[\mathbb{1}_{uv}]\expected[\mathbb{1}_{xy}] + \sum_{|uv\cap xy|=1}\expected[\mathbb{1}_{uv}\mathbb{1}_{xy}]  + \sum_{|uv\cap xy|=2}\expected[\mathbb{1}_{uv}\mathbb{1}_{xy}]\right)  - \expected[e_B(T)]^2\\
&\leq \left(e_B(G)^2p^4 + n^3p^3 + n^2p^2\right)  - \expected[e_B(T)]^2\\
&\leq \mathbb{E}[e_B(T)]^2,
\end{align*}
where the last inequality is true if we choose $K_0$ to be a large enough constant.

With all the inequalities we have collected, we can now bound the probability that bad event \eqref{1.1 blue} happens via Chebyschev:
\begin{align*}
    \mathbb{P}\left(\mbox{\eqref{1.1 blue} occurs}\right) &\leq \mathbb{P}\left(e_B(T) \leq \ep'\binom{M}{2}\right) + \frac{1}{10}\\
    &\leq \mathbb{P}\left( \left| e_B(T) - \mathbb{E}[e_B(T)]\right| \geq \delta \cdot \mathbb{E}[e_B(T)]\right) +\frac{1}{10} \\
    &\leq \frac{\delta^{-2}Var[e_B(T)]}{\mathbb{E}[e_B(T)]^2} +\frac{1}{10}\leq \delta^{-2} + \frac{1}{10} = \frac{1}{C} + \frac{1}{10} < \frac{1}{5}.
\end{align*}

By symmetry, the probability that \eqref{1.1 red} occurs is also less than $\frac{1}{5}$. Using the union bound, there is a positive probability that none of the events occur.
\end{proof}
\subsection{Balanced Ramsey numbers of sparse graphs}\label{linear ramsey}
In this section we show how to prove Theorem \ref{thm:linearramsey} using Theorem \ref{thm:mainextremal}. We begin by formally stating the analogous result in the ordinary Ramsey setting. Recall that when $H$ is a graph $R(H)$ denotes the smallest integer $N$ such that for all $n\geq N$ all two-colorings of $K_n$ contains a monochromatic copy of $H$.
\begin{theorem}[Chvatal-R\"odl-Szemer\'edi-Trotter]\label{chvatal}
For every $\Delta$ there exists a $c$ such that $R(H)\leq c\cdot v(H)$ for all graphs $H$ with maximum degree at most $\Delta$.
\end{theorem}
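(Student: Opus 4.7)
The plan is to apply Szemer\'edi's regularity lemma to the 2-coloring of $K_N$ with $N = c\cdot v(H)$, reduce the problem to finding a monochromatic clique in a bounded-size reduced graph (handled by ordinary Ramsey applied to a constant-size object), and then use the standard embedding lemma for bounded-degree graphs into $\ep$-regular blow-ups to produce a monochromatic copy of $H$.

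In more detail, fix a density $d := 1/3$ and choose $\ep = \ep(\Delta, d)$ small enough that the embedding lemma below applies. Apply Szemer\'edi's regularity lemma to the 2-coloring to obtain a partition into clusters $V_0, V_1, \ldots, V_M$ with $|V_0| \leq \ep N$ and all $|V_i|$ of equal size $m$ for $i \geq 1$, where $M = M(\ep)$ depends only on $\Delta$, and such that all but at most $\ep M^2$ pairs are $\ep$-regular in both colors. Form a reduced graph $R$ on $[M]$: connect $i$ and $j$ whenever $(V_i, V_j)$ is $\ep$-regular in both colors, and color the edge $ij$ by whichever color has density at least $1/2 > d$ on the pair. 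The edge density of $R$ is at least $1 - 2\ep$, so a routine supersaturation/Tur\'an-type argument produces a clique of size $R(\Delta+1, \Delta+1)$ in $R$ provided $M$ is large enough. Applying ordinary Ramsey inside this clique yields a monochromatic (say red) sub-clique on $\Delta+1$ vertices of $R$, corresponding to $\Delta+1$ clusters in which every pair is $\ep$-regular with red density at least $d$.

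Since $H$ has maximum degree $\Delta$, it has chromatic number at most $\Delta + 1$ by greedy coloring, so fix a proper $(\Delta+1)$-coloring of $V(H)$ and designate each color class to one of the $\Delta + 1$ clusters of the red clique. Apply the standard embedding lemma for bounded-degree graphs into an $\ep$-regular blow-up of density $d$: provided each cluster has size at least $v(H)$ (up to a constant factor depending on $\Delta$ and $\ep$), the embedding succeeds and produces a red copy of $H$. Since $m \approx N/M$, taking $c$ at least a sufficiently large constant multiple of $M$ suffices; as $M$ depends only on $\Delta$, so does $c$, giving $R(H) \leq c \cdot v(H)$.

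The main obstacle is the embedding lemma itself, which is the only step requiring real work. It is proved by greedy embedding while maintaining a candidate set in the designated cluster for each not-yet-embedded vertex of $H$. When a vertex $u \in V(H)$ is embedded to some $x$, the candidate set of each unembedded neighbor $v$ of $u$ is intersected with the red neighborhood $N_R(x)$ inside $v$'s cluster; by $\ep$-regularity, for all but an $\ep$-fraction of choices of $x$, this intersection retains at least a $(d-\ep)$-fraction of the previous candidate set. Since each vertex of $H$ has at most $\Delta$ neighbors, any candidate set is intersected at most $\Delta$ times before being used, so it stays of size roughly $(d-\ep)^\Delta m$, which is substantial. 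The technical bookkeeping, ensuring that one can always make a simultaneously good choice of $x$ at every step so that no candidate set shrinks too rapidly, is the careful but standard core of the argument.
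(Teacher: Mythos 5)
Your proposal is correct and follows essentially the same route as the paper's own sketch: apply the regularity lemma to the two-coloring, form a reduced graph, use Tur\'an's theorem to locate a large clique of $\ep$-regular pairs, color its edges by majority color and apply Ramsey to extract a monochromatic $(\Delta+1)$-clique, then greedily embed $H$ using its $(\Delta+1)$-coloring and $\ep$-regularity. The only stylistic difference is that you track a fixed density threshold $d=1/3$ rather than simply taking the majority color, but this is an equivalent bookkeeping choice.
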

A stronger conjecture was made by Erd\H{o}s and Burr \cite{EB}, replacing the word ``degree" with ``degeneracy", which is now proved by Lee \cite{Lee}. We present a sketch of the original proof of the theorem, which relies on the Regularity lemma, as our proof of Theorem \ref{thm:linearramsey} will follow this closely. Regularity free proofs of this result were obtained that give significantly better bounds on $c=c(\Delta)$ by (see \cite{FS2}).

Before we begin with the sketch, we state the Regularity lemma, starting with the necessary terminology. Let $G:=(A,B)$ be a bipartite graph with $|A|=|B|=n$. For $X\subset A$ and $Y\subset B$ define $d(X,Y) = \frac{e(X,Y)}{|X||Y|}$. We call $G$ $\ep$-regular if for all subsets $X\subseteq A$ and $Y\subseteq B$ with $|X|, |Y|\geq \ep n$ we have $|d(X,Y)-d(A,B)|\leq \ep$. 
\begin{lemma}[Szemer\'edi \cite{regularity}]\label{regularity lemma}
For any $\ep>0$ there exists an $M:=M(\ep)$ such that any graph $G$ can be partitioned into $k$ (where $\frac{1}{\ep}\leq k \leq M$) equal sized parts $(G_i)_{i\in [k]}$ and a junk set $J$ with $|J|<\ep n$ such that all but $\ep$-fraction of the pairs $(G_i,G_j)$ are $\ep$-regular.  
\end{lemma}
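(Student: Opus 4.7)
The plan is to prove the Regularity Lemma via Szemer\'edi's classical ``energy increment'' (or index) argument. I first define the \emph{index} of a partition $\mathcal{P}=\{V_1,\ldots,V_k\}$ of $V(G)$ as $\mathrm{ind}(\mathcal{P}) = \sum_{i,j} \frac{|V_i||V_j|}{n^2}\, d(V_i,V_j)^2$, which always lies in $[0,1]$. A direct application of Cauchy--Schwarz (equivalently, Jensen for $x\mapsto x^2$) shows that any common refinement of $\mathcal{P}$ has index at least that of $\mathcal{P}$. The strategy is then to show that any non-$\ep$-regular partition can be refined so that the index jumps by a polynomial amount in $\ep$; since the index is bounded above by $1$, the procedure must terminate.

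The core lemma is a quantitative ``swap'' statement: if $(V_i,V_j)$ is an irregular pair, witnessed by sets $X\subseteq V_i,\,Y\subseteq V_j$ with $|X|\geq \ep|V_i|$, $|Y|\geq \ep|V_j|$, and $|d(X,Y)-d(V_i,V_j)|>\ep$, then replacing the pair $(V_i,V_j)$ in the index with the four cells of $\{X, V_i\setminus X\}\times\{Y, V_j\setminus Y\}$ increases its contribution by at least $\ep^4\cdot|V_i||V_j|/n^2$. If the partition fails to be $\ep$-regular, then at least $\ep k^2$ pairs are irregular, so the common refinement with respect to all such witnesses raises the index by at least $\ep^5$.

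With this in hand, start from the trivial partition $\{V\}$ and iterate: at each step, either the current partition is $\ep$-regular (and we are done), or we refine and the index grows by at least $\ep^5$. The process terminates within $\lceil\ep^{-5}\rceil$ rounds. Each refinement step at worst splits every part into $2^k$ pieces, so iterating yields the tower-type upper bound on the number of parts $M=M(\ep)$. Finally, to produce equal-sized parts with a small junk set $J$ of size less than $\ep n$, I would discard any overly small pieces into $J$, then equalize the rest by uniformly subdividing larger pieces (pushing the remainders into $J$ as well); the adjusted partition remains regular on most pairs (with a slightly degraded parameter), so one absorbs the loss by rescaling $\ep$ at the outset.

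The main obstacle is the quantitative swap lemma. The gain of $\ep^4|V_i||V_j|/n^2$ comes from an algebraic identity expressing $\mathrm{ind}(\mathcal{P}')-\mathrm{ind}(\mathcal{P})$ as a variance-like quantity, combined with a careful Cauchy--Schwarz applied to the defect term produced by $X$ and $Y$. Making the constants line up so that the cumulative index gain outpaces the (potentially doubly exponential) growth of the number of parts is the delicate piece of the argument; the final equalization step and the analytic bookkeeping, by contrast, are routine.
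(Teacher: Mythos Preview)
The paper does not prove this lemma at all: it is stated with attribution to Szemer\'edi and used as a black box. Your outline is the standard energy-increment proof, which is correct in spirit and is indeed essentially Szemer\'edi's original argument, so there is nothing to compare against in the paper itself.

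One small remark on your sketch: starting from the trivial partition $\{V\}$ will not directly give you the lower bound $k\geq 1/\ep$ on the number of parts; in practice one initializes with an arbitrary equitable partition into roughly $1/\ep$ parts and refines from there. Also, the ``each refinement splits every part into $2^k$ pieces'' line is the right idea but needs a touch more care, since $k$ itself is changing at each step---this is exactly what produces the tower bound rather than merely an exponential one. These are bookkeeping issues rather than genuine gaps.
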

We sometimes call the pairs of the form $(G_i, G_j)$ \textit{super-edges}, the $G_i$ \textit{super-vertices}. Given $\ep$ and $d$, the graph with vertex set the super-vertices and $G_i$ joined to $G_j$ when the pair $(G_i, G_j)$ is $\ep$-regular with density at least $d$ is called the \textit{cluster graph}. We can now proceed with the sketch of Theorem \ref{chvatal}.
\begin{proof}[Sketch of proof of Theorem \ref{chvatal} \cite{CRST}]
For a carefully chosen $\ep$, consider an $\ep$-regular partition of a sufficiently large \bicolored $K_n$. If a pair is $\ep$-regular in the red edges it is also $\ep$-regular in the blue edges. Our goal is to find a subgraph in the cluster graph with $\Delta + 1$ parts. Indeed, afterwards, using $\chi(H)\leq \Delta + 1$, as well as $\Delta(G)\leq \Delta$, and ensuring each cluster is sufficiently large, a simple greedy algorithm shows that $H$ can be embedded in the graph (we will repeat this part of the argument in more detail in the proof of Theorem \ref{thm:linearramsey}).
\par To achieve the goal, we first eliminate all edges between non $\ep$-regular pairs in the regularity partition, yielding a cluster graph with density at least $1-\ep$. by Tur\'an's theorem, we can find a clique in this graph of size $1/\ep$. Associate with each super-edge $(G_i, G_j)$ the majority color class in the complete bipartite graph between $G_i$ and $G_j$ (ties can be broken arbitrarily). We can now apply Ramsey's theorem with the colors of the super-edges to obtain a monochromatic clique in the cluster graph. As we control $\ep$, we can ensure that this clique is on at least $\Delta+1$ super-vertices, concluding the proof.
\end{proof}
Since our proof of Theorem \ref{thm:linearramsey} closely resembles the argument in \cite{CRST}, we put the details in the appendix. Here, we will point out only the key differences with the sketch of the earlier result. First, compared to the first paragraph of the earlier sketch, our goal will change to finding an unavoidable $(\Delta + 1)$-graph in the cluster graph. Secondly, to achieve this goal, instead of applying Tur\'an's theorem to the cluster graph, we will apply our Theorem \ref{thm:mainextremal}. And thirdly, we will need a colored version of the Regularity lemma.
By simply iterating the regularity lemma to the red edges, and then the blue edges, we can obtain the following corollary.
\begin{corollary}\label{regularity corollary}
For any \bicolored graph $G$, we can obtain a single regularity partition where the red and the blue graphs both satisfy the conclusions of the Regularity lemma, with the same exact hypotheses. 
\end{corollary}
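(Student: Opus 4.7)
The plan is to iteratively apply the Regularity Lemma (Lemma~\ref{regularity lemma}) once per color, using its standard \emph{refinement form}: given any initial equipartition $\mathcal{P}_0$ into $k_0$ parts and any $\delta>0$, one can produce a $\delta$-regular equipartition that refines $\mathcal{P}_0$ into at most $M(k_0,\delta)$ parts. Concretely, first apply Lemma~\ref{regularity lemma} to the red subgraph $G_R$ with a small parameter $\varepsilon_1$ (to be determined), obtaining an $\varepsilon_1$-regular equipartition $\mathcal{P}_R$ of $V(G)$ into $k_R$ parts (plus a small junk set). Second, apply the refinement version to $G_B$ with parameter $\varepsilon$, starting from $\mathcal{P}_R$, to obtain an equipartition $\mathcal{P}$ refining $\mathcal{P}_R$ into $k$ parts which is $\varepsilon$-regular for $G_B$.

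The main task is then to verify that $\mathcal{P}$ is also $\varepsilon$-regular for $G_R$. Each pair of parts of $\mathcal{P}$ sits inside a unique pair of $\mathcal{P}_R$; the sub-pairs coming from the $\varepsilon_1$-irregular (for red) pairs of $\mathcal{P}_R$ constitute at most an $\varepsilon_1$-fraction of all pairs of $\mathcal{P}$ by equitability of the refinement. For sub-pairs coming from an $\varepsilon_1$-regular pair $(V_a,V_b)$ of $\mathcal{P}_R$, the standard slicing lemma yields: any $(W_i,W_j)$ with $|W_i|/|V_a|,|W_j|/|V_b|\geq \alpha$ is $\max(\varepsilon_1/\alpha,2\varepsilon_1)$-regular for red. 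Since the two partitions are equitable, $\alpha=k_R/k$.

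The only real obstacle is the parameter choice: we need $\varepsilon_1\leq \min(\varepsilon/2,\varepsilon\cdot k_R/k)$, yet both $k_R$ and $k$ are in turn determined by $\varepsilon_1$ through the two applications of the Regularity Lemma. This apparent circularity is resolved by routine parameter book-keeping---taking $\varepsilon_1$ sufficiently small (on the order of $\varepsilon$ divided by an appropriate tower function of $1/\varepsilon$) ensures both inequalities hold. The resulting $\mathcal{P}$ is then $\varepsilon$-regular for $G_R$ and $G_B$ simultaneously, and its number of parts is bounded by a (larger but concrete) function of $\varepsilon$ obtained by composing the two invocations of $M$, as the corollary requires.
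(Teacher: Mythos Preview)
Your overall strategy---apply regularity to the red graph, then refine for blue---is precisely what the paper's one-line justification (``by simply iterating the regularity lemma to the red edges, and then the blue edges'') says, and you go further than the paper by naming the actual mechanism (the refinement form of Lemma~\ref{regularity lemma} together with the slicing lemma) and by flagging the real obstacle: the choice of $\varepsilon_1$. However, your proposed resolution of that circularity does not work. Shrinking $\varepsilon_1$ lets $k_R$ range up to $M(\varepsilon_1)$, whence $k$ can be as large as $M\bigl(M(\varepsilon_1),\varepsilon\bigr)$; since $M$ is of tower type, the quantity $\varepsilon_1\cdot k/k_R$ need not tend to $0$ as $\varepsilon_1\to 0$---the worst case actually blows up. No a~priori choice of $\varepsilon_1$ can guarantee $\varepsilon_1\le \varepsilon\cdot k_R/k$ uniformly over all possible outcomes of the two applications, so a two-shot procedure followed by slicing cannot close the loop.

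What ``iterating'' should mean here is genuine alternation: refine for red, then for blue, and if red $\varepsilon$-regularity has been destroyed by the second refinement, refine for red again, and so on. Each refinement forced by a failure of $\varepsilon$-regularity in one colour increases that colour's mean-square density (its ``energy'') by at least $\varepsilon^5$, while refinement never decreases the other colour's energy; both energies are bounded above by $1$, so the process terminates after $O(\varepsilon^{-5})$ rounds with a single partition that is $\varepsilon$-regular for both colours simultaneously, and whose number of parts is bounded by an iterated tower in $1/\varepsilon$. Equivalently, one can run the energy-increment proof once using the sum of the red and blue energies as the potential. Either route avoids the slicing lemma entirely and hence sidesteps the circular parameter choice.
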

A minor difference besides these three will be that we will
need to keep track of how balanced our graph is as we go through the regular clean-up procedure, as well as when we are coloring the super-edges (in particular, coloring by the majority color class will not work here, as this might result in a monochromatic coloring). To achieve this we state a lemma that allows us to extract a balanced set of red and blue super-edges from the cluster graph of a balanced graph. As we will need this lemma again, we provide a proof in the paper. 
\par We will use the version of the Chernoff bound from Lemma \ref{chernoff} in our proof. 
\begin{lemma}\label{lem:clusterlemma}
Let $\ep, \ep_0>0$, $k\geq 1/\ep_0$, and $G$ be a $k$-partite \bicolored graph with parts $\mathcal{R}:=\{P_1,\cdots, P_k\}$ such that $d_R(P_i,P_j)\geq \ep$ or $d_B(P_i,P_j)\geq \ep$ or $d(P_i,P_j)=0$ for any two parts $P_i$ and $P_j$. Then, there exists an edge-coloring of a graph on vertex set $\mathcal{R}$ such that a red (resp. blue) edge $\{P_i, P_j\}$ implies $d_R(P_i,P_j)\geq \ep$ (resp $d_B(.)$), and furthermore, $|d_R(\mathcal{R})-d_R(G)|\leq \gamma_R$, where $\gamma_R=4\ep_0/\sqrt{d_R(G)}$, and the same holds for blue.  
\end{lemma}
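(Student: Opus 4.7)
The plan is to use the probabilistic method: I construct a random coloring of a graph on vertex set $\mathcal{R}$ that respects the validity constraints, and then use Chernoff's bound (Lemma~\ref{chernoff}) to show that the red and blue densities of the resulting $\mathcal{R}$ concentrate around the corresponding densities of $G$. For each pair $\{P_i,P_j\}$ with $d(P_i,P_j)>0$, I would sample an independent uniform $U_{ij}\in[0,1]$ and color the pair red if $U_{ij}<d_R(P_i,P_j)$ and $d_R(P_i,P_j)\geq\ep$, blue if $U_{ij}\in[d_R(P_i,P_j),d(P_i,P_j))$ and $d_B(P_i,P_j)\geq\ep$, and otherwise leave the pair without an edge in $\mathcal{R}$. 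By construction, red (respectively, blue) super-edges arise only from pairs with $d_R\geq\ep$ (respectively, $d_B\geq\ep$), so the validity constraint is automatic.

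Next I would compute the expectation $\mu_R:=\mathbb{E}[e_R(\mathcal{R})]=\sum_{d_R(P_i,P_j)\geq\ep}d_R(P_i,P_j)$, which equals $d_R(G)\binom{k}{2}$ minus the contribution of pairs with $0<d_R(P_i,P_j)<\ep$; this ``subcritical'' bias has to be absorbed into $\gamma_R$, and it is here that the $1/\sqrt{d_R(G)}$ dependence in $\gamma_R$ plays a role, since the bias is most troublesome relative to $d_R(G)$ exactly when $d_R(G)$ is small. For concentration, since $e_R(\mathcal{R})$ is a sum of independent $\{0,1\}$-valued indicators, Chernoff's bound applied with a suitable multiplicative parameter gives $|e_R(\mathcal{R})-\mu_R|=O(\sqrt{\mu_R})$ with probability bounded below by a constant. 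Using $\mu_R\leq d_R(G)\binom{k}{2}$ together with $k\geq 1/\ep_0$ (which yields $\binom{k}{2}\geq 1/(2\ep_0^2)$), this translates to $|d_R(\mathcal{R})-\mu_R/\binom{k}{2}|=O(\ep_0\sqrt{d_R(G)})$, which is within $\gamma_R=4\ep_0/\sqrt{d_R(G)}$.

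A symmetric argument controls $d_B(\mathcal{R})$, and a union bound over the two concentration events produces a positive-probability coloring satisfying both $|d_R(\mathcal{R})-d_R(G)|\leq\gamma_R$ and $|d_B(\mathcal{R})-d_B(G)|\leq\gamma_B$; any realization in that event yields the desired edge-coloring. The main obstacle is the bias analysis: the subcritical pairs, where one density is strictly below~$\ep$, introduce a systematic mismatch between $\mu_R$ and $d_R(G)\binom{k}{2}$, and controlling this mismatch may require either a more careful choice of sampling probabilities (for instance, redirecting subcritical red mass to blue whenever $d_B\geq\ep$) or a tighter accounting that trades off the Chernoff slack against the bias.
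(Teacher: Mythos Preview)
Your overall strategy---a random coloring with independent pair-by-pair choices, concentration via Chernoff, then a union bound over the two colors---is exactly the paper's. The gap is precisely the one you flag at the end: the bias from subcritical pairs. With your sampling rule, $\mu_R=\sum_{\{i,j\}:\,d_R\geq\ep}d_R(P_i,P_j)$, so the bias $d_R(G)\binom{k}{2}-\mu_R=\sum_{\{i,j\}:\,0<d_R<\ep}d_R(P_i,P_j)$ can be as large as $\ep\binom{k}{2}$. Since $\ep$ is a fixed parameter while $\gamma_R=4\ep_0/\sqrt{d_R(G)}$ shrinks with $\ep_0$, this bias cannot in general be absorbed into $\gamma_R$; redirecting the subcritical red mass to blue, as you suggest, only transfers the same bias to the other color. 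So as written the argument does not close.

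The paper sidesteps the bias entirely by a different sampling rule: for each pair with $d(P_i,P_j)>0$, color it red with probability $d_R(P_i,P_j)/d(P_i,P_j)$ and blue otherwise, with no threshold check inside the random model. Every non-trivial pair then receives exactly one color, and the expected red count comes out to $d_R(G)\binom{k}{2}$ on the nose---there is no subcritical term to account for. Chernoff (Lemma~\ref{chernoff}) applied with $\gamma=\gamma_R$ bounds the failure probability by $2\exp\bigl(-\binom{k}{2}\cdot 16\ep_0^2/3\bigr)\leq 2/7$ once $k\geq 1/\ep_0$, and the union bound over the two colors leaves probability at least $3/7$ of a good outcome. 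The validity constraint (a red super-edge forces $d_R\geq\ep$) is handled outside the probabilistic step: in the paper's applications the preceding clean-up already guarantees that whenever $d_R(P_i,P_j)>0$ one has $d_R(P_i,P_j)\geq\ep$, so the rule cannot produce an invalid red edge.
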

\begin{proof}
\par We produce the coloring randomly, and bound the failure probability via the Chernoff bound. For any pair in $\mathcal{R}$ with positive density of edges, we add a red edge $\{P_i, P_j\}$ with probability $d_R(P_i, P_j)/d(P_i, P_j)$, and we add a blue edge otherwise. 
\par Let $X_{i,j}$ be the indicator random variable for whether $\{P_i, P_j\}$ is red or not in $\mathcal{R}$. Let $X:=\sum_{\{i,j\}\in \binom{[k]}{2}}X_{i,j}$. Then,
$$\expected[X]=\sum_{\{i,j\}\in \binom{[k]}{2}}\mathbb{P}(X_{i,j}=1)=d_R(G)\binom{k}{2}$$.
\par By the Chernoff bound,
$$\mathbb{P}(|X-\expected[X]|\geq \gamma \expected[X])\leq 2e^{-\expected[X]\gamma^2/3}\leq 2e^{-d_R(G)\frac{16k^2}{6}\ep_0^2d_R(G)^{-1}}\leq 2e^{-2}\leq \frac{2}{7}$$
\par Where in the second to last inequality we used $k\geq \frac{1}{\ep_0}$. Repeating the same calculation with the blue edges, we conclude that with at least $3/7$ probability, there exists a colored edge assignment in which neither the red nor the blue density deviate more than $\gamma_R$ or $\gamma_B$ from their prior densities in $G$.
\end{proof}
\section{The balanced extremal function}\label{extremal section}

\par In this section we systematically study the function $\mathrm{ex}(\ep, n, \mathcal{F})$ defined in the introduction. First, recall that a \bicolored graph $G$ is called $\ep$-balanced if both color classes have at least $\ep\cdot e(G)$ edges, and that a family $\mathcal{F}$ of \bicolored graphs is color-consistent if it is unchanged by a permutation of the colors. As defined in the introduction, given a color-consistent family of \bicolored graphs $\mathcal{F}$, we let $\mathrm{ex}(\ep, n, \mathcal{F})$ be the maximum integer $M$ such that there exists an $n$-vertex $\ep$-balanced \bicolored graph with $M$ edges that avoids all $F\in \mathcal{F}$. We also defined $\mathrm{ex}(\ep, \mathcal{F}) = \lim_{n\to \infty} \mathrm{ex}(\ep, n, \mathcal{F}) / \binom{n}{2}$.

Several remarks are in order about the definitions. First, we will be concerned with the case when $\mathcal{F}$ is composed of connected, non-monochromatic graphs. Thus $\mathrm{ex}(\ep, n, \mathcal{F})$ will always be at least quadratic in $n$, as two disjoint cliques of the same size but different colors will not contain any forbidden subgraphs. Indeed, we have $\mathrm{ex}(\ep, n, \mathcal{F})\geq \frac{1}{4}\binom{n}{2}$. Hence we are simply interested with the limit value, $\mathrm{ex}(\ep, \mathcal{F})$. Of course, it is not a priori clear that this limit exists. We deal with this technicality in the next lemma.
\begin{proposition} \label{lem:limitexists}
For any family of graphs $\mathcal{F}$ and any $\ep>0$, $\mathrm{ex}(\ep, \mathcal{F}):=\lim_{n\to\infty}\mathrm{ex}(\ep, n, \mathcal{F})/\binom{n}{2}$
exists.
\end{proposition}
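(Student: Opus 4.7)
The plan is to set $f(n) := \mathrm{ex}(\ep,n,\mathcal{F})/\binom{n}{2}$, which lies in $[0,1]$, and establish convergence by proving that for any $\eta > 0$ and any $m \geq m_0(\eta)$, we have $f(m) \geq (1 - O(\eta)) f(N)$ for every sufficiently large $N$ (depending on $m$ and $\eta$). Granted this, choosing $N$ along a subsequence realizing $\limsup_N f(N)$ yields $\liminf_m f(m) \geq (1-O(\eta)) \limsup_N f(N)$, and then letting $\eta \to 0$ forces $\liminf = \limsup$, so the limit exists.

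To prove the key inequality, let $G$ be an extremal $\ep$-balanced $\mathcal{F}$-free graph on $N$ vertices, so $e(G) = f(N)\binom{N}{2}$, and sample $S \subseteq V(G)$ uniformly among all $m$-subsets. Setting $p := \binom{m}{2}/\binom{N}{2}$, linearity gives $\expected[e(G[S])] = p\, e(G)$, $\expected[e_R(G[S])] = p\, e_R(G)$, and the analogous identity for blue. A standard covariance computation (the only nontrivial contribution coming from pairs of edges sharing a vertex) shows $\mathrm{Var}(e_R(G[S])) = O(m^3)$; since the paper already notes $e(G) \geq \binom{N}{2}/4$, we also have $\expected[e_R(G[S])] = \Omega(\ep m^2)$. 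So for $m \geq C/(\eta^2 \ep^2)$, Chebyshev's inequality (Lemma \ref{chebyshev}) and a union bound yield positive probability that each of $e(G[S])$, $e_R(G[S])$, $e_B(G[S])$ lies within a $(1 \pm \eta)$ factor of its mean.

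Such an $S$ gives an $\mathcal{F}$-free bicolored graph whose red fraction $e_R(G[S])/e(G[S])$ is within $O(\eta)$ of $e_R(G)/e(G) \in [\ep,\,1-\ep]$. To enforce exact $\ep$-balance, if the red fraction falls below $\ep$, I remove blue edges one at a time until it hits $\ep$ exactly; the number of deletions is at most $O(\eta) \cdot e(G[S])$, and because $\ep \leq 1/2$, the blue fraction of the resulting graph equals $1-\ep \geq \ep$, so balance holds. The symmetric repair handles the case where the red fraction exceeds $1-\ep$. The repaired graph is $\ep$-balanced, remains $\mathcal{F}$-free since subgraphs preserve $\mathcal{F}$-freeness, and has at least $(1-O(\eta))p\, e(G) = (1-O(\eta))\binom{m}{2} f(N)$ edges, giving $f(m) \geq (1-O(\eta)) f(N)$ as required. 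The main obstacle is precisely the gap between ``nearly $\ep$-balanced'' and ``$\ep$-balanced'': when $e_R(G)/e(G)$ is exactly $\ep$, sampling does not automatically land above the threshold, and the assumption $\ep \leq 1/2$ is essential for the repair step to work without destroying the other color's balance constraint.
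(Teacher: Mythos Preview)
Your proof is correct and follows essentially the same approach as the paper: sample a uniformly random $m$-subset of an extremal graph on $N$ vertices, use a second-moment computation plus Chebyshev to show the red and blue edge counts concentrate, and then delete $O(\eta)$-fraction of edges from the overrepresented color to restore exact $\ep$-balance. The only cosmetic difference is that the paper packages this as a proof by contradiction (two subsequential limits $2\delta$ apart, then derive a too-dense $\mathcal{F}$-free graph on $n_2$ vertices), whereas you phrase it directly as $\liminf_m f(m)\geq (1-O(\eta))\limsup_N f(N)$; the underlying computation and the repair step are the same.
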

\begin{proof}
Assume to the contrary. Since $\mathrm{ex}(\ep, n, \mathcal{F})$ is bounded between $0$ and $1$, we may fix two subsequences which converge to different limits. Say the two limits are $2\delta$ apart, for some $\delta>0$. We can then fix $n_1$ from the sequence with the larger limit and $n_2$ from the other sequence so that $n_1>n_2$ and 
\[
\frac{\mathrm{ex}(\ep, n_1, \mathcal{F})}{\binom{n_1}{2}} > \frac{\mathrm{ex}(\ep, n_2, \mathcal{F})}{\binom{n_2}{2}} + \delta,
\]
We note that we may choose $n_2$ to be as large as we need. Let $G_1$ be an $\mathcal{F}$-avoiding graph on $n_1$ vertices and $\textup{ex}(\ep, n_1, \mathcal{F})$ edges. We will use this graph to construct an $n_2$-vertex $\ep$-balanced $\mathcal{F}$-avoiding graph with density strictly larger than $\frac{\mathrm{ex}(\ep, n_2, \mathcal{F})}{\binom{n_2}{2}}$, yielding a contradiction. 
\par We sample a subset of $n_2$ vertices from $G_1$ (without repetition) uniformly at random, and call it $G_2$. Let $p(n):=\prod_{0\leq i<n}\frac{n_2-i}{n_1-i}$. Note that $\mathbb{E}[e_B(G_2)]=e_B(G_1)p(2)$. 
\par We now estimate $Var[e_B(G_2)]$. Let $\mathbb{1}_{uv}$ denote the random variable indicating whether the blue edge $uv$ is contained in $G_2$ so that $e_B(G_2)=\sum_{uv\in E_B(G_1)}\mathbb{1}_{uv}$.  \begin{align*}&Var[e_B(G_2)]=\expected[e_B(G_2)^2]-\expected[e_B(G_2)]^2\\
&= \left(\sum_{\substack{uv, xy\in E_B(G_1)\\|\{u,v,x,y\}|=4}}\expected[\mathbb{1}_{uv}\mathbb{1}_{xy}] + \sum_{\substack{uv, xy\in E_B(G_1)\\|\{u,v,x,y\}|=3}}\expected[\mathbb{1}_{uv}\mathbb{1}_{xy}]  + \sum_{\substack{uv, xy\in E_B(G_1)\\|\{u,v,x,y\}|=2}}\expected[\mathbb{1}_{uv}\mathbb{1}_{xy}]\right)  - \expected[e_B(G_2)]^2\\
&\leq \left(e_B(G_1)^2p(4) + n_1^3p(3) + n_1^2p(2)\right)  - \expected[e_B(G_2)]^2 = o(\expected[e_B(G_2)]^2).
\end{align*}
\par Thus, since the variance is low, for any $\gamma>0$, if $n_2$ is sufficiently large, we have by Chebyschev's inequality (Lemma \ref{chebyshev}) that $$\mathbb{P}(|e_B(G_2)-\expected[e_B(G_2)]|\geq \gamma\cdot \expected[e_B(G_2)])<1/2.$$
It follows that we may choose $n_2$ large enough that with probability more than $1/2$, $|d_B(G_2)-d_B(G_1)|\leq \delta/5$
\par We can similarly show that with probability more than $1/2$, the red density of $G_2$ is at most $\delta/5$ away from that of $G_1$, and so with positive probability, both events happen simultaneously. For such a $G_2$, by deleting at most $\frac{2\delta}{5}\frac{\binom{n_2}{2}}{2}$ edges, we can create an $\ep$-balanced $G_2'$. Overall, we can thus ensure that $G_2'$ has density strictly larger than $\frac{\mathrm{ex}(\ep, n_2, \mathcal{F})}{\binom{n_2}{2}}$, is $\ep$-balanced, and is $\mathcal{F}$-avoiding (as a subgraph of $G_1$), which gives us the desired contradiction. 
\end{proof}
We finally remark that it is clear from the definition that $\mathrm{ex}(\ep, n, \mathcal{F})$ is monotone increasing for decreasing $\ep$.
\subsection{Inevitable graphs and characterization}
Now that we know $\mathrm{ex}(\ep, \mathcal{F})$ exists for every family $\mathcal{F}$, the first natural question is to determine for which graphs $\mathrm{ex}(\ep, \mathcal{F})<1$. The answer turns out to be exactly those families containing a \bicolored subgraph which can be embedded in both types of unavoidable graphs. We defined such graphs to be \textit{inevitable} in the introduction section. Here we give a structural characterization of such \bicolored graphs. 
\begin{proposition}\label{inevitable characterization}
Let $H$ be a \bicolored graph. Then, $H$ is inevitable if and only if there exists a vertex partition $L\sqcup R = V(H)$ with the edges contained in $L$ and $R$ entirely red, and edges that go across entirely blue, such that either:
\begin{enumerate}
    \item $H$ does not contain a blue-red-blue walk on $3$ edges.
    \item $H$ does not contain a red-blue-red path,
\end{enumerate}
or the same except permuting red and blue.

\end{proposition}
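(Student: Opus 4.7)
The plan is to split the characterization into two independent embedding problems. An embedding of $H$ into a Type 2 $t$-graph (for $t$ large) corresponds exactly to the stated partition, while an embedding into a Type 1 $t$-graph corresponds exactly to the absence of an appropriate walk of length three. Once these equivalences are in place, combining a choice of $2K_t$-color with a choice of clique-color yields the four cases of the proposition.

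For the $(\Leftarrow)$ direction, given the partition $L \sqcup R$ (red within, blue across), I obtain a Type 2 embedding for any $t \geq v(H)$ by placing $L$ into one red $K_t$ and $R$ into the other. For a Type 1 embedding, I look for a subset $A' \subseteq V(H)$ that covers all edges of one color and induces no edges of the other; placing $A'$ into the clique side of the Type 1 $t$-graph gives the embedding. Under condition (1), I take $A'$ to be the set of blue-incident vertices: if $A'$ contained a red edge $uv$ with witnessing blue edges $uu'$ and $vv'$, then $u'uvv'$ is a B-R-B walk (a path if $u' \neq v'$, a triangle otherwise), contradicting the hypothesis. Under condition (2), I take $A'$ to be the set of red-incident vertices: if $A'$ contained a blue edge $uv$ with red edges $uu'$ and $vv'$, we would get an R-B-R path when $u' \neq v'$; the remaining triangle case $u' = v'$ is excluded by the partition itself, since $u',u,v$ would all lie in a single part and force $uv$ to be red. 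The color-swapped cases follow by symmetry.

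For the $(\Rightarrow)$ direction, given that $H$ is inevitable, fix $t$ large enough that $H$ embeds into both a Type 1 and a Type 2 $t$-graph. The Type 2 embedding supplies the partition directly: if red is the $2K_t$-color, set $L$ and $R$ to be the preimages of the two $K_t$'s; red edges of $H$ land within one of the two cliques while blue edges land in the $K_{t,t}$ between them. The Type 1 embedding gives the walk condition: assuming red is the clique color, any R-B-R walk $v_0 v_1 v_2 v_3$ in $H$ would have all four vertices forced into the red clique side $A$ (via the red edges $v_0 v_1$ and $v_2 v_3$), but then the blue edge $v_1 v_2$ would lie inside the red clique, contradicting color preservation. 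Matching the four combinations of $2K_t$-color and clique-color to the four cases of the statement completes the proof. The main subtlety to watch out for is the asymmetry between conditions (1) and (2): paths suffice in (2) because the partition already rules out R-B-R triangles, whereas (1) must be phrased in terms of walks since B-R-B triangles are consistent with ``red within, blue across''.
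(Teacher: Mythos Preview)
Your proof is correct and follows essentially the same approach as the paper's: both identify the Type~2 embedding with the bipartition $L\sqcup R$ and the Type~1 embedding with the absence of a suitable three-edge walk, and in the $(\Leftarrow)$ direction both embed into a Type~1 graph by placing the set of vertices incident to the clique color into the monochromatic $K_t$. Your presentation is somewhat more explicit---you name the set $A'$ directly and carefully justify the walk/path asymmetry between conditions (1) and (2)---whereas the paper invokes the equivalent ``independent sets $X,Y$'' reformulation of condition~(1) stated just before the proof; but these are the same argument.
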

Note that the first case can be replaced equivalently with: There exists subsets $X\subseteq L$, $Y\subseteq R$ such that $X$ and $Y$ are independent and all the edges that are between $L$ and $R$ are contained between $X$ and $Y$. A related characterization for a slightly different problem was given in Theorem 2.4 of \cite{AAC}. Our proof of Proposition \ref{inevitable characterization} is similar to that in \cite{AAC} but set in our context.

\begin{proof}
\par Any $H$ that falls in one of the two cases can be embedded into both types of unavoidable graphs and thus is inevitable. Indeed, embedding such $H$ into a Type $2$ graph is trivial because of the first part of the proposition. To embed a Case $1$ $H$ into a Type $1$ graph we simply embed the independent sets inside the isolated color class. To embed a Case $2$ $H$ into a Type $1$ graph we can greedily embed all the (for example) red edges inside the isolated (red) clique, and as there cannot be any blue edge between two red edges, we will not have a problem embedding the blue edges as well. We now focus on the other direction of the proof. 
\par Assume that $H$ is inevitable. The first part of the proposition follows immediately from the fact that there exists an embedding of $H$ into a Type $2$ unavoidable graph, where (without loss of generality) the edges of the bipartite graph is blue. We also now $H$ can be embedded in a Type $1$ unavoidable graph. We case on the color of the bipartite graph in the Type $1$ graph $H$ can be embedded to. 
\par \textbf{Case 1: } $H$ has an embedding to a Type $1$ unavoidable graph where the edges that go across are red. Let's call the set of vertices embedded in the blue part $B$, and the red part $R$. Now, observe that if any two vertices from $L$ are embedded in $B$, they cannot be adjacent, as edges contained in $L$ are red, but edges contained in $B$ are blue. Same goes for any two vertices from $R$. Therefore, any blue edge between vertices from $L$ and $R$ are contained in independent sets. 
\par \textbf{Case 2: } $H$ has an embedding to a Type $1$ unavoidable graph where the edges that go across are blue. Now it is clear that any path that starts red-blue will end on a vertex adjacent only to blue edges. 
\end{proof}
\par We finally state the following theorem, the proof of which now follows from the definitions.
\begin{theorem} Let $\mathcal{F}$ be a family of \bicolored graphs. The following are equivalent.
\begin{enumerate}[(1)]
    \item $\mathrm{ex}(\ep, \mathcal{F})<1$
    \item $R(\ep, \mathcal{F})<\infty$
    \item There exists $F, G\in \mathcal{F}$ such that $F$ and $G$ are contained in a Type $1$ graph, and a Type $2$ graph, respectively. 
\end{enumerate}
\end{theorem}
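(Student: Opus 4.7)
The plan is to establish the cycle of implications $(1) \Rightarrow (2) \Rightarrow (3) \Rightarrow (1)$. The implication $(1) \Rightarrow (2)$ is immediate from the definitions: if $\mathrm{ex}(\ep, \mathcal{F}) < 1$, then for sufficiently large $n$ we have $\mathrm{ex}(\ep, n, \mathcal{F}) < \binom{n}{2}$, so every $\ep$-balanced $K_n$ must contain some $F \in \mathcal{F}$, giving $R(\ep, \mathcal{F}) \leq n$. The two remaining implications require more work.

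For $(3) \Rightarrow (1)$ I would invoke Theorem~\ref{thm:mainextremal}. Fix $F, G \in \mathcal{F}$ with $F$ embeddable into a Type~1 unavoidable $t_F$-graph and $G$ into a Type~2 unavoidable $t_G$-graph, and set $t := \max(t_F, t_G)$; both $F$ and $G$ remain embeddable into an unavoidable $t$-graph of the corresponding type, since enlarging a Type~$i$ unavoidable graph yields another Type~$i$ unavoidable graph containing the smaller one as a subgraph. Picking any $\ep' \in (0, \ep)$ and applying Theorem~\ref{thm:mainextremal} produces a constant $K$ such that every sufficiently large $\ep$-balanced graph of density at least $1 - 1/(K\cdot R(\ep', t))$ contains an unavoidable $t$-graph, which is necessarily Type~1 or Type~2 and therefore contains $F$ or $G$. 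This yields $\mathrm{ex}(\ep, \mathcal{F}) \leq 1 - 1/(K\cdot R(\ep', t)) < 1$.

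The substantive step is $(2) \Rightarrow (3)$. Assuming $R := R(\ep, \mathcal{F})$ is finite, the plan is to exhibit, for some large $N \geq R$, two \bicolored $K_N$'s that are both $\ep$-balanced and that embed into a Type~1 and a Type~2 unavoidable graph respectively; the definition of $R$ then guarantees each contains some member of $\mathcal{F}$, yielding the required $F$ and $G$. The Type~2 witness is the Type~2 unavoidable $(N/2)$-graph itself: its color densities both tend to $1/2$ as $N \to \infty$, so it is $\ep$-balanced for every $\ep \leq 1/2$ once $N$ is large. The Type~1 witness is a $K_N$ whose red edges form a clique on $t$ vertices and whose remaining edges are all blue; as $t$ ranges over $\{0, \dots, N\}$, the red density $\binom{t}{2}/\binom{N}{2}$ varies essentially continuously from $0$ to $1$, so we can pick $t$ so that both color densities lie in $[\ep, 1-\ep]$ (possible since $\ep \leq 1/2$), and the resulting graph embeds into a Type~1 unavoidable $s$-graph with $s = \max(t, N-t)$ because enlarging either half of such a structure only adds blue edges compatible with the construction.

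The main obstacle is the asymmetric Type~1 half of $(2) \Rightarrow (3)$: the naive Type~1 unavoidable $(N/2)$-graph has red density only $\approx 1/4$ and fails to be $\ep$-balanced for $\ep$ close to $1/2$, so one must first tune the clique size $t$ to rebalance the densities and then recognize the resulting $K_N$ as a subgraph of a Type~1 unavoidable of size $\max(t, N-t)$. Once this structural observation is in place the rest of the argument is a direct unpacking of the definitions together with a single application of Theorem~\ref{thm:mainextremal}.
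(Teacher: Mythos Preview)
Your argument is correct and fills in what the paper leaves implicit (the paper merely asserts that the equivalence ``follows from the definitions''). Your cycle $(1)\Rightarrow(2)\Rightarrow(3)\Rightarrow(1)$, with Theorem~\ref{thm:mainextremal} supplying the last implication, is exactly the intended route.

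One small point deserves a sentence of justification. In the $(2)\Rightarrow(3)$ step you claim that for some $N\geq R$ you can choose $t$ so that the $K_N$ with a red $K_t$ is $\ep$-balanced. For $\ep<\tfrac12$ the interval $[\ep,1-\ep]$ has positive length and the values $\binom{t}{2}/\binom{N}{2}$ are $O(1/N)$-dense, so this is fine. But at $\ep=\tfrac12$ you need $\binom{t}{2}=\tfrac12\binom{N}{2}$ \emph{exactly}, which is not available for every $N$. Two easy fixes: either note that the Pell equation $v^2-2u^2=-1$ (obtained from $2t(t-1)=N(N-1)$ via $u=2t-1$, $v=2N-1$) has infinitely many solutions, so such $N$ exist arbitrarily large; or, more simply, prove $\neg(3)\Rightarrow\neg(1)$ instead of $\neg(3)\Rightarrow\neg(2)$ for this half --- then you may delete $O(N)$ edges from the red-$K_t$-colored $K_N$ to make it exactly $\tfrac12$-balanced, obtaining an $\mathcal{F}$-free $\tfrac12$-balanced graph of density $1-o(1)$, whence $\mathrm{ex}(\tfrac12,\mathcal{F})=1$.
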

It may be that $F=G$ in $(3)$, in which case $\mathcal{F}$ contains an inevitable graph. 

\subsection{Erd\H{o}s-Stone type theorem}

In this subsection, we prove a colorful Erd\H{o}s-Stone theorem using the parameter $\mathrm{ex}(\ep,\, . \,)$. The proof uses the Regularity lemma (Lemma \ref{regularity lemma} and Corollary \ref{regularity corollary}) and additionally the following embedding lemma.




\begin{lemma}\label{embedding lemma}
Let $H$ be some graph embeddable in a sufficiently large Tur\'an graph on $n$ parts. For any $d>0$ there exists an $\ep>0$ with the following property: Let $R:=\cup_{i\in[n]} R_i$ be a subset of a Tur\'an graph on $n$ parts where each part has size large enough and if ${v_i,v_j}$ is an edge in $H$, then the pair $(R_i,R_j)$ is $\ep$-regular with density at least $d$. Then, $H$ is embeddable in $R$.  
\end{lemma}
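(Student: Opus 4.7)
The plan is to follow the standard greedy strategy used to prove the Embedding (Key) Lemma in the regularity method; the statement involves no colors, so this is a purely graph-theoretic embedding argument. Since $H$ embeds into the complete $n$-partite graph, we may fix an assignment $\phi\colon V(H)\to [n]$ with $\phi(u)\neq \phi(v)$ for every edge $uv\in E(H)$. Enumerate $V(H)=\{v_1,\dots,v_m\}$ arbitrarily, and for every index $j$ maintain a \emph{candidate set} $C_j\subseteq R_{\phi(v_j)}$, initialized to $C_j = R_{\phi(v_j)}$. At step $i$ I will pick an image $x_i\in C_i\setminus\{x_1,\dots,x_{i-1}\}$ such that $|N(x_i)\cap C_j|\geq (d-\ep)|C_j|$ for every not-yet-embedded neighbor $v_j$ of $v_i$, and then update $C_j \leftarrow C_j\cap N(x_i)$ for each such $j$.

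Two invariants must be verified. First, each candidate set $C_j$ is shrunk at most $\Delta:=\Delta(H)$ times over the course of the algorithm, each time by a factor of at least $d-\ep$, so $|C_j|\geq (d-\ep)^{\Delta} |R_{\phi(v_j)}|$ throughout. Choosing $\ep$ small enough that $(d-\ep)^{\Delta} > \ep$ guarantees the hypothesis $|C_j|\geq \ep |R_{\phi(v_j)}|$ needed to invoke regularity of the pair $(R_{\phi(v_i)},R_{\phi(v_j)})$. Second, a valid $x_i$ exists at each step: by $\ep$-regularity applied with the subset $C_j\subseteq R_{\phi(v_j)}$, at most $\ep|R_{\phi(v_i)}|$ vertices of $R_{\phi(v_i)}$ fail the condition $|N(x)\cap C_j|\geq (d-\ep)|C_j|$ for a given such $j$. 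Summing over the at most $\Delta$ currently-unembedded neighbors of $v_i$, and excluding the at most $m$ previously-used vertices, the number of forbidden choices in $R_{\phi(v_i)}$ is at most $\Delta\ep |R_{\phi(v_i)}|+m$, strictly less than $|C_i|\geq (d-\ep)^{\Delta} |R_{\phi(v_i)}|$ provided $\ep$ is small and the parts are sufficiently large compared to $m=v(H)$.

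The quantifier order is therefore: first choose $\ep = \ep(d,\Delta)$ so that $(d-\ep)^{\Delta} > 2\Delta \ep$ (which in particular forces $(d-\ep)^{\Delta} > \ep$), and then demand $|R_i|\gg m$ for each $i$. There is no substantial obstacle here---this is the textbook embedding lemma, and the only care required is to track that the candidate sets never fall below the regularity threshold and that the admissible images in $C_i$ always outnumber the forbidden ones.
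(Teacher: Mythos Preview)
Your proposal is correct and is precisely the standard greedy embedding argument; the paper does not give its own proof but simply cites Diestel's Lemma~7.3.2, which is exactly the argument you wrote out. There is nothing to compare---you have supplied the proof the paper omits.
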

The proof and explicit bounds follow from a simple inductive argument. For example, see Lemma 7.3.2 in Diestel \cite{Diestel}. We use the embedding lemma with \bicolored graphs as follows:

\begin{lemma}\label{colored embedding lemma}
Let $H$ be a \bicolored graph whose uncolored version is embeddable in a sufficiently large Tur\'an graph on $n$ parts. For any $d>0$ there exists an $\ep>0$ with the following property: Let $R:=\cup_{i\in[n]} R_i$ be a colored graph whose uncolored version is a subset of a Tur\'an graph on $n$ parts where each part has size large enough and if ${v_i,v_j}$ is an edge of color $c$ in $H$, then the pair $(R_i,R_j)$ is $\ep$-regular with $c$-density at least $d$. Then, $H$ is embeddable in $R$.  
\end{lemma}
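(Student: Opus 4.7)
The plan is to derive Lemma~\ref{colored embedding lemma} from the uncolored embedding lemma (Lemma~\ref{embedding lemma}) by passing to an auxiliary host graph. Given a bicolored $H$ with vertex set indexed so that $v_i$ is destined for $R_i$, and a host $R=\cup_{i\in[n]} R_i$ satisfying the hypotheses, I would construct an auxiliary uncolored graph $R'$ on the same $n$-partite vertex set as follows: for each edge $\{v_i,v_j\}$ of $H$ of color $c_{ij}$, retain in $R'$ only those edges of $R$ between $R_i$ and $R_j$ that have color $c_{ij}$; for pairs $(R_i,R_j)$ not corresponding to an edge of $H$, put no edges in $R'$.

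Choose $\ep$ to be the value guaranteed by Lemma~\ref{embedding lemma} applied to the uncolored $H$ with density threshold $d$. I would then check that $R'$ meets the hypotheses of that lemma: for every edge $\{v_i,v_j\}$ of $H$, the hypothesis of Lemma~\ref{colored embedding lemma} asserts that the color-$c_{ij}$ bipartite subgraph between $R_i$ and $R_j$ is $\ep$-regular with density at least $d$, but this is precisely the bipartite graph between $R_i$ and $R_j$ in $R'$. Since the uncolored version of $H$ embeds in a sufficiently large Tur\'an graph on $n$ parts by assumption, Lemma~\ref{embedding lemma} produces an embedding $\phi: V(H)\to V(R')$ sending each edge $\{v_i,v_j\}$ of $H$ to an edge of $R'$ between $R_i$ and $R_j$. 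By construction of $R'$, every such edge is an edge of $R$ of color $c_{ij}$. Hence $\phi$ is simultaneously a graph embedding of $H$ into $R$ and color-preserving, which is exactly the conclusion sought.

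The main (and only) subtlety is confirming that the colored notion of $\ep$-regularity with $c$-density at least $d$ corresponds exactly to the uncolored notion of $\ep$-regularity with density at least $d$ for the bipartite subgraph consisting of color-$c$ edges only; this is immediate from the definitions. Thus there is no substantial obstacle: the whole argument is a one-line reduction to the uncolored embedding lemma, with the role of color handled by the preliminary pruning step that defines $R'$.
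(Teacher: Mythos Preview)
Your proposal is correct and aligns with the paper's treatment: the paper does not give a separate proof of the colored embedding lemma at all, merely stating it as the bicolored version of Lemma~\ref{embedding lemma} (``We use the embedding lemma with bicolored graphs as follows''). Your explicit reduction---keeping only the edges of the prescribed color between each relevant pair and then invoking the uncolored lemma---is precisely the one-line argument the paper leaves implicit.
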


 We may now state and prove our multicolor generalization of the celebrated Erd\H{o}s-Stone theorem.
\begin{theorem}\label{ES theorem}
Let $\ep, \delta>0$ and $t\in \mathbb{N}$ be arbitrary. Let $\mathcal{F}$ be some finite family of \bicolored graphs. Let $c=\mathrm{ex}(\ep, \mathcal{F})$. Then, for all sufficiently large $n$ (with respect to $\mathcal{F}$, $\ep$, $t$ and $\delta$) we have that any $\ep$-balanced $n$-vertex graph with $(c+\delta)\binom{n}{2}$ edges contains a $t$-blow-up of some $F\in \mathcal{F}$.
\end{theorem}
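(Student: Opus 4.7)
The plan is to mimic the classical proof of Erd\H{o}s--Stone, with regularity replaced by its bicolored analogue (Corollary \ref{regularity corollary}) and Tur\'an's theorem replaced by the defining property of $c = \mathrm{ex}(\ep, \mathcal{F})$. More precisely, I would apply bicolored regularity to $G$, use Lemma \ref{lem:clusterlemma} to pass to a bicolored cluster graph $\mathcal{R}$ on $k$ super-vertices that is essentially as dense and as balanced as $G$, invoke the definition of $c$ to locate some $F\in \mathcal{F}$ inside $\mathcal{R}$, and finally use the colored embedding lemma (Lemma \ref{colored embedding lemma}) to promote this copy of $F$ to a $t$-blowup of $F$ inside $G$.

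For the setup, fix $\eta$ much smaller than $\ep\delta$, set $d := \eta$, and choose $\ep_r$ small enough that Lemma \ref{colored embedding lemma} applies to every $t$-blowup $F^{(t)}$ of every $F\in\mathcal{F}$ with density parameter $d$ (the family $\mathcal{F}$ is finite, so one $\ep_r$ works for all of them). Apply Corollary \ref{regularity corollary} to $G$ with parameter $\ep_r$, obtaining clusters $V_1,\dots,V_k$ with $1/\ep_r \le k \le M(\ep_r)$ and a small junk set $V_0$. Now clean up by discarding every edge that (i) touches $V_0$, (ii) lies inside some cluster, (iii) lies across a pair that is not $\ep_r$-regular in both colors, or (iv) is red in a pair whose red density is below $d$ (respectively blue). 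A routine count shows this destroys at most $O(\eta)n^2$ edges of each color.

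Next, apply Lemma \ref{lem:clusterlemma} to the cleaned graph with density threshold $d$ and a sufficiently small $\ep_0$. This produces a bicolored cluster graph $\mathcal{R}$ on $k$ vertices such that each color-$c$ super-edge $\{V_i,V_j\}$ corresponds to an $\ep_r$-regular pair with $c$-density at least $d$, and such that the total, red and blue densities of $\mathcal{R}$ agree with those of the cleaned graph up to an additive $O(\eta)$. Thus $\mathcal{R}$ has density at least $c+\delta-O(\eta)$, with both color fractions within $O(\eta)$ of $\ep$. If $\mathcal{R}$ fails to be $\ep$-balanced, delete at most $O(\eta/\ep)\binom{k}{2}$ edges of the overrepresented color to restore balance; solving the two linear inequalities shows that a valid deletion exists precisely because $\ep \le 1/2$. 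The resulting graph $\mathcal{R}'$ is $\ep$-balanced on $k$ vertices with density at least $c+\delta/2$, which by the definition of $c=\mathrm{ex}(\ep,\mathcal{F})$ (applied for $k$ large enough, which we may assume by shrinking $\ep_r$) exceeds $\mathrm{ex}(\ep,k,\mathcal{F})/\binom{k}{2}$. Hence $\mathcal{R}'$ contains a copy of some $F\in\mathcal{F}$.

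This copy assigns to each vertex $v_i$ of $F$ a cluster $V_i$ such that for every color-$c$ edge $\{v_i,v_j\}$ of $F$, the pair $(V_i,V_j)$ is $\ep_r$-regular with $c$-density at least $d$. Because $\ep_r$ was chosen so that Lemma \ref{colored embedding lemma} applies to the $t$-blowup of $F$, and because the clusters have size $\gtrsim n/k$ which is large for $n$ sufficiently large, the $t$-blowup embeds into $G$, as desired. The main obstacle throughout is the balance-preservation step: since the definition of $c$ only applies to strictly $\ep$-balanced graphs and regularity cleanup need not preserve balance exactly, it is essential to include the rebalancing pruning, and it is the hypothesis $\ep \le 1/2$ that makes this pruning possible without destroying the density surplus $\delta$.
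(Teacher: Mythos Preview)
Your proposal is correct and follows essentially the same route as the paper: apply bicolored regularity (Corollary \ref{regularity corollary}), clean up low-density and irregular pairs, color the cluster graph via Lemma \ref{lem:clusterlemma}, rebalance so the cluster graph is $\ep$-balanced with density strictly above $c$, extract some $F\in\mathcal{F}$ by the definition of $c$, and lift to a $t$-blowup with Lemma \ref{colored embedding lemma}. The only cosmetic differences are that the paper rebalances twice (once in $G'$ after cleanup and once in $\mathcal{R}'$) and assumes at the outset that $\ep$ is maximal, whereas you rebalance once at the cluster level; also, be careful that the error $\gamma$ from Lemma \ref{lem:clusterlemma} is controlled by $\ep_0\ge 1/k$ rather than by $\eta$ directly, so your regularity parameter $\ep_r$ must be chosen small enough not only for the embedding lemma but also to force $k$ large enough that $\gamma$ is negligible compared to $\delta$.
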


\begin{proof}
We give a brief outline of the proof first. We take a graph satisfying our assumptions, and apply the colored Regularity lemma (Corollary \ref{lem:clusterlemma}) to it. Then we create an auxiliary cluster graph with vertices corresponding to the parts of the regularity partition. We show there is a choice of red and blue edges in this cluster graph such that whenever there is a colored edge in the cluster graph, the pair of vertex sets in the regularity partition corresponding to that edge will have a large density in that color. We also show that with this choice, the resulting cluster graph is $\ep$-balanced and has density greater than $c$. We may then apply Lemma \ref{colored embedding lemma} to find our $t$-blowup. We now go through the details.

Let $G$ be an $n$ vertex $\ep$-balanced graph with $(c+\delta)\binom{n}{2}$ where $\ep$ is chosen as large as possible (so that the smaller color class has exactly $\ep(c+\delta)\binom{n}{2}$ edges). $\ep_0$ will be a positive constant depending on $\ep, \delta, \mathcal{F}$ that we can choose small enough for all of the subsequent calculations, and let $d: = \frac{\delta \ep^2}{4}$. Apply Corollary \ref{regularity corollary} to $G$ with parameter $\ep_0$ to acquire a regularity partition of $G$ with parts $\{R_i\}_{i=1}^k$. We apply the standard clean-up process:

\begin{itemize}
    \item Delete all edges incident on the junk set. There are at most $\ep_0n^2$ such edges.
    \item Delete all edges between pairs which are not $\ep_0$-regular. There are at most $\ep_0\binom{k}{2}(\frac{n}{k})^2\leq \frac{\ep_0n^2}{2}$ such edges.
    \item Delete all red edges between pairs with red density less than $d$. There are at most $\binom{k}{2}d(\frac{n}{k})^2\leq \frac{dn^2}{2}$ such edges.
    \item Repeat the previous step for blue edges. Again, there are at most $\frac{dn^2}{2}$ such edges.
    \item If necessary, delete edges from the appropriate color class in order to ensure the resulting graph is $\ep$-balanced.
\end{itemize}

Call the resulting graph $G'$. Note we deleted at most $(d+\frac{3\ep_0}{2})n^2$ edges before the last step. In the last step, we delete at most $\frac{1-\ep}{\ep}(d+\frac{3\ep_0}{2})n^2\leq \frac{\delta\ep}{2}\binom{n}{2}$ edges for a small enough choice of $\ep_0$.  Without loss of generality, call the smaller color class in the graph blue. We thus know that blue still has density at least $\ep(c+\frac{\delta}{2})$ and red still has density at least $(1-\ep)(c+\frac{\delta}{2})$ in the resulting graph $G'$. Denote these quantities $d_B(G')$ and $d_R(G')$.

Consider an auxiliary graph $\mathcal{R}$ whose vertices $P_i$ ($i\in[k]$) correspond to the $k$ parts in the regularity partition. We wish to add colored edges in $\mathcal{R}$ so that a red edge indicates high red density between the $\ep_0$-regular pair in $G'$, and similarly for blue. We wish to ensure this yields an $\ep$-balanced graph with density greater than $c$. To do so, we use Lemma \ref{lem:clusterlemma}. We can thus fix a coloring of the edges of $\mathcal{R}$ where the color of a pair indicates at least $d$ density of that color in the pair. Furthermore, we know that the density of red and blue in the coloring of $\mathcal{R}$ is at most $\gamma$ away from their respective densities in $G'$ (where $\gamma:=\max(\gamma_R, \gamma_B)$).
\par If necessary, delete red edges until the red density in $\mathcal{R}$ is at most $d_R(G')$. Further, if necessary, delete more edges from the appropriate color class until the resulting graph on $\mathcal{R}$ is $\ep$-balanced. As $d_B(G')=\frac{\ep}{1-\ep}d_R(G')$, we delete at most $\frac{(1-\ep)}{\ep}\gamma\binom{k}{2}$ edges in this step. Call the resulting graph $\mathcal{R}'$.

Now, note that we may choose $\ep_0$ small enough to control $\gamma$ so that $\mathcal{R}'$ has more than $(c+\delta/4)\binom{k}{2}$ edges. Since $\mathcal{R}'$ is $\ep$-balanced, we have that for $\ep_0$ small enough (so that $\mathcal{R}'$ has enough vertices), by definition of $c$, $\mathcal{R}'$ must contain a copy of some $F\in \mathcal{F}$. Since $\ep_0$ may also be chosen small enough with respect to $d$, we can apply Lemma \ref{colored embedding lemma} with the copy of $F$ in $\mathcal{R}'$ and conclude that $G'$ contains a $t$-blowup of $F$, completing the proof.
\end{proof}
\section{Non-monochromatic Cliques and Cycles}\label{nonmono section}

In this section, we will study the densities of $\frac{1}{2}$-balanced graphs that ensure the existence of either non-monochromatic cliques or non-monochromatic cycles, proving Theorems \ref{cycle prop} and \ref{clique prop}.
\subsection{Cliques}
Let $\mathcal{K}_k$ denote the family of non-monochromatic cliques on $k$ vertices. In this subsection we will show that 
    
    \[\mathrm{ex}\left(\frac{1}{2}, \mathcal{K}_k\right)\leq 1-\frac{1}{k+O(1)},\]
proving Theorem \ref{clique prop}
\begin{proof}

We modify a proof of Tur\'an's theorem attributed to Alon and Spencer \cite{aigner}.

Let $G$ be a $\frac{1}{2}$-balanced \bicolored graph with density $1-\frac{1}{k+C}:=\delta$, and choose $C$ large enough that $\delta > \frac{9}{10}$. Furthermore, assume that $G$ contains no non-monochromatic copy of $K_k$. We will show that $C$ must be bounded by an absolute constant (independent of $k$).

Consider the following algorithm to produce a non-monochromatic clique. First, sample the vertices with repetition until all vertices are seen, creating a sequence of vertices. Observe that this also induces a uniformly sampled permutation of the vertex set, and we will consider both the sequence with repetition and the corresponding permutation. Define $S$ to be the set of all vertices which appear before all of their non-neighbors in the permutation, and define $W$ to be the the number of vertices in the sequence (not the permutation) before a non-monochromatic triangle appears (note that $W$ will always be well-defined, as $\delta > \frac{2}{3}$ implies that there is at least $1$ non-monochromatic triangle). Let $T$ be the non-monochromatic triangle that appears first, and let $W'$ be the set of vertices that appear in the first $W$ terms of the sequence. 

By definition of $S$, the set $S$ induces a clique in the graph. Therefore, the set $T\cup S \setminus W'$ forms a non-monochromatic clique in $G$. Since $G$ is $\mathcal{K}_k$-free, this implies that 
\[
|T\cup S \setminus W'| < k.
\]

By linearity of expectation, we have that 
\[
\mathbb{E}[ |T \cup S \setminus W'| ] < k,
\]
and in particular $\mathbb{E}[|S|) - \mathbb{E}(W] \leq  \mathbb{E}[|S|) - \mathbb{E}(|W'|] < k.$

Now, we have that 
\[
\mathbb{E}[|S|] = \sum_{v\in V(G)} \frac{1}{n-d(v)}.
\]
By convexity of the function $f(x) = 1/x$, we have that 
\[
\mathbb{E}[|S|] \geq n\left(\frac{1}{n-\frac{1}{n}\sum d(v)}\right) = n \left(\frac{1}{ n - \delta(n-1)}\right) \sim \frac{1}{1-\delta} = k+C.
\]

Combining inequalities gives that $C < \mathbb{E}[W]$ and hence it suffices to show that $\mathbb{E}[W]$ is bounded by a constant. 

To do this, we claim that there are $\Omega(n^3)$ non-monochromatic triangles in $G$. To see this, in a graph on $n$ vertices with $e$ edges, there are at least $\frac{e(4e-n^2)}{3n}$ triangles (see \cite{MM} page 275). Since $\delta > \frac{9}{10}$, this implies that there are at least $\frac{12}{100}n^3$ triangles in $G$. On the other hand, in a graph with $\binom{x}{2}$ edges, there are at most $\binom{x}{3}$ triangles (see \cite{lovasz} Chapter 13, Exercise 31b). Since the red edges and the blue edges each individually have density less than $1/2$, this implies that there are at most $\frac{1}{6\sqrt{2}} n^3$ monochromatic triangles in $G$. Noting that $\frac{1}{6\sqrt{2}} < \frac{1}{12}$ proves the claim.

Now, to bound $\mathbb{E}[W]$ by a constant, we observe that in the sequence of vertices selected with repetition, every consecutive and disjoint triple is an independently and uniformly selected sequence of $3$ vertices. Since $G$ has $\Omega(n^3)$ non-monochromatic triangles, each of these sequences of $3$ vertices has a positive probability (independent of $k$) of inducing a non-monochromatic triangle, and hence the expected waiting time to see a non-monochromatic triangle is $O(1)$. 

  \end{proof}
\subsection{Cycles}
Here, our goal is to prove Theorem \ref{cycle prop}, establishing that with the exception of triangles, the extremal threshold for finding non-monochromatic cycle of any length in $(1/2)$-balanced graphs is $1/4$. We will end up proving a result significantly stronger, by finding the extremal threshold of all cycles where one of the color classes is a disjoint union of even length paths. (There are cycles with inevitable colorings which are not of this form, but the smallest example is on $8$ vertices.)
\begin{theorem}\label{cycle theorem} Let $C$ be an inevitable cycle such that all maximal (without loss of generality) blue paths are of even edge length. Then we have the extremal value dichotomy:
\begin{enumerate}
    \item If the red (or blue) color class in $C$ contains any isolated edges, then $\mathrm{ex}(\frac{1}{2}, n, C)= \frac{2}{3}$
    \item Otherwise, all maximal red (or blue) paths are of length at least $2$, and $\mathrm{ex}(\frac{1}{2}, n, C)= \frac{1}{2}$
\end{enumerate}
\end{theorem}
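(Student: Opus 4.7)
The plan is to prove matching lower and upper bounds for each side of the dichotomy. For the lower bounds, I would use explicit constructions. In case 1, take the tripartite graph on equal parts $A \sqcup B \sqcup C$ with all edges inside $A$ and between $A,B$ red, all edges inside $C$ and between $B,C$ blue, and no edges within $B$ or between $A,C$. This is $1/2$-balanced of density $2/3$, and it avoids $C$: any red/blue corner must sit in $B$ (the only part incident to both colors), yet the within-$B$ non-edges prevent an isolated red edge from joining two corners. In case 2, take the disjoint union of two monochromatic cliques of size $n/2$ in distinct colors; this is balanced of density $1/2$ with no bichromatic subgraph.

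For the upper bound in case 1, I would apply Theorem \ref{ES theorem} to the color-consistent family $\mathcal{T} = \{\text{RRB}, \text{RBB}\}$ of bichromatic triangles, whose extremal density is $2/3$ by DeVos--McDonald--Montejano. A $1/2$-balanced graph with density $> 2/3 + \delta$ then contains a $t$-blowup of an RRB or RBB triangle; WLOG take the RBB blowup with parts $V_1, V_2, V_3$ and $V_1 V_2$ all red, $V_1 V_3, V_2 V_3$ all blue. I would map corners of $C$ into $V_1 \cup V_2$, red-path interiors alternating between $V_1$ and $V_2$, and blue-path interiors alternating through $V_3$. A red path of length $\ell$ connects two corners on the same side iff $\ell$ is even, while any even-length blue path can end on either $V_1$ or $V_2$ via the choice of the last $V_3$-incident edge; the cycle therefore always closes regardless of the parities of the red-path lengths.

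For the upper bound in case 2, the argument is more delicate since density slightly above $1/2$ does not force a bichromatic triangle (the DeVos extremal of density $2/3$ already has none). My plan is to apply regularity (Corollary \ref{regularity corollary}) and Lemma \ref{lem:clusterlemma} to obtain a balanced cluster graph $\mathcal{R}'$ of density $> 1/2 + \delta/2$, and then establish the structural dichotomy that $\mathcal{R}'$ contains either a bichromatic triangle (handled as in case 1) or a ``red triangle with a blue pendant'': three pairwise red-adjacent super-vertices, one of which also has a blue super-edge to a fourth super-vertex. In the latter configuration, the blowup is a red $K_{t,t,t}$ together with a blue $K_{t,t}$ attached at one part, and placing all corners of $C$ at the attachment part allows embedding: red paths of any length $\ge 2$ fit inside the tripartite red structure, and even-length blue paths fit inside the blue bipartite structure. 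The main obstacle is proving the dichotomy itself: showing that if no bichromatic triangle exists in $\mathcal{R}'$ (so that every cherry $V_2 - V_1 - V_3$ fails to close) yet the density exceeds $1/2 + \delta$, then $\mathcal{R}'$ nonetheless contains enough red triangles that at least one is incident to a blue super-edge. This stability-type argument, necessary precisely because the cherry blowup alone cannot accommodate odd-length red paths, is the delicate technical step that distinguishes case 2 from case 1.
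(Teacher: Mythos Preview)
Your plan coincides with the paper's proof almost exactly. The lower-bound constructions are identical to the paper's, and for the upper bound in Case~1 the paper likewise invokes the DeVos--McDonald--Montejano result together with the colored Erd\H{o}s--Stone theorem (Theorem~\ref{ES theorem}) to obtain a blow-up of a non-monochromatic triangle, into which the cycle is then embedded.

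For Case~2, what you call the ``structural dichotomy'' --- that a $\tfrac{1}{2}$-balanced graph of density exceeding $\tfrac{1}{2}$ must contain a non-monochromatic triangle or a monochromatic triangle with a pendant edge of the opposite color --- is precisely what the paper isolates as Lemma~\ref{handle lemma} (with your ``red triangle with a blue pendant'' being the paper's \emph{handle}). The only organisational difference is that the paper states this as a free-standing extremal result, $\mathrm{ex}(\tfrac{1}{2},\{T_1,T_2,H_1,H_2\})=\tfrac{1}{2}$, and then feeds it into Theorem~\ref{ES theorem}, whereas you propose to pass to the cluster graph first and prove the dichotomy there; since Theorem~\ref{ES theorem} itself works via the cluster graph, this is purely a difference in packaging. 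You are right that this lemma is the technical heart of Case~2: the paper devotes all of Section~\ref{handle section} to it, using a cloning/symmetrisation argument to reduce the purely-red and purely-blue vertex classes to complete multipartite structure, followed by an edge-count optimization over the resulting configuration. Your proposal does not yet supply this argument, so the proof is incomplete until you do.
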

Theorem \ref{cycle prop} follows from the second case of the dichotomy, as there exist non-monochromatic inevitable colorings of cycles (of length greater than $3$) with the blue color class only containing even length maximal paths and no isolated edges in red.
\par The theorem largely depends on the next lemma. As before, let $T_1$ and $T_2$ be the two types of non-monochromatic triangles, and let $H_1$ and $H_2$ be a red triangle incident with a blue edge and a blue triangle incident with a red edge respectively, and call these \bicolored graphs {\em handles}.
\begin{lemma} $\mathrm{ex}(\frac{1}{2}, \{T_1, T_2, H_1, H_2\})= \frac{1}{2}$
\end{lemma}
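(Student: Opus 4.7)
The lower bound is easy: take $V = A \sqcup B$ with $|A|=|B|=n/2$ and let $G$ be a red clique on $A$ together with a blue clique on $B$, with no cross edges. Every triangle is monochromatic (ruling out $T_1, T_2$), and since every vertex is incident only to edges of one color, no monochromatic triangle can be extended to a handle, so $H_1, H_2$ are also avoided. The density tends to $\frac{1}{2}$, giving $\mathrm{ex}(\frac{1}{2},\{T_1,T_2,H_1,H_2\})\ge 1/2$.

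For the upper bound, suppose $G$ is a $\frac{1}{2}$-balanced $n$-vertex graph avoiding $\mathcal{F}=\{T_1,T_2,H_1,H_2\}$. The plan is to do a structural analysis, reduce to a triangle-free subgraph, apply Mantel, and close the argument via stability and the balance condition. Define $T_R\subseteq V$ to be the set of vertices lying in some monochromatic red triangle and $T_B$ analogously; let $M=V\setminus(T_R\cup T_B)$. The avoidance of $H_1$ forces every vertex in $T_R$ to have only red incident edges, and symmetrically for $T_B$; hence $T_R\cap T_B=\emptyset$ and there are no $T_R$--$T_B$ edges. Since $T_1,T_2$ are forbidden, all triangles of $G$ are monochromatic, and a short case check shows they must be fully contained in $T_R$ (red) or in $T_B$ (blue). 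In particular, $G[M]$ is triangle-free, and for every $u\in M$ the set $N(u)\cap T_R$ is independent in $G[T_R]$ (any edge there would complete a red triangle through $u$, forcing $u\in T_R$); symmetrically for $T_B$.

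The key consequence is that $G^{\star}:=G-E(G[T_R])-E(G[T_B])$ is triangle-free: any triangle of $G$ uses an edge lying inside $T_R$ or inside $T_B$. By Mantel, $e(G^\star)\le n^2/4$. Moreover, the red and blue subgraphs of $G^\star$ are each individually triangle-free (red triangles live in $T_R$, blue in $T_B$), so $e_R(G^\star),e_B(G^\star)\le n^2/4$. Writing $r=|T_R|,b=|T_B|, a_r=e(G[T_R]),a_b=e(G[T_B])$, we obtain
\[
e(G)\;=\;e(G^\star)+a_r+a_b\;\le\;\tfrac{n^2}{4}+\binom{r}{2}+\binom{b}{2}.
\]

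The main obstacle is to argue that $a_r+a_b=o(n^2)$ whenever $e(G^\star)$ is close to $n^2/4$, so that the two terms cannot simultaneously be large. Here is how I would carry this out. If $r+b=o(n)$, then $a_r+a_b=o(n^2)$ and we are done. Otherwise, the plan is to combine the balance identity
\[
a_r+e_R(G^\star)\;=\;a_b+e_B(G^\star)\;=\;e(G)/2
\]
with Mantel's stability theorem applied to $G^\star$: if $e(G^\star)\ge n^2/4-\eta n^2$ then $G^\star$ can be made bipartite by modifying $O(\sqrt{\eta})n^2$ edges, giving an approximate bipartition $V=X\sqcup Y$. I would then show that $T_R\cup T_B$ is (after negligible corrections) contained in one part, say $X$: if many vertices of $T_R$ lay in both parts, the red edges between them would have to lie in $G^\star$, contradicting near-bipartiteness. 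Once $T_R\cup T_B\subseteq X$ (up to $o(n)$ vertices), almost every $u\in Y$ is adjacent to almost all of $T_R$ via red edges in $G^\star$; but two adjacent vertices of $T_R$ together with any such $u$ would form a red triangle with $u\notin T_R$, a contradiction, unless $a_r=o(n^2)$. Symmetrically $a_b=o(n^2)$, and we conclude $e(G)\le n^2/4+o(n^2)$, i.e., density at most $\frac{1}{2}+o(1)$.

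The single hardest step is the quantitative stability argument in the last paragraph: making precise the claim that, when $G^\star$ is near-extremal for Mantel, the structural constraints (pure-color $T_R,T_B$ vertices plus the independence condition on $N(u)\cap T_R$ for $u\in M$) force $|T_R|+|T_B|=o(n)$. Everything else is bookkeeping on top of the straightforward structural/Mantel reduction.
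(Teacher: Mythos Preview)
Your reduction to the triangle-free graph $G^\star$ is clean and correct: since every triangle of $G$ is monochromatic and hence lies entirely inside $T_R$ or $T_B$, the graph $G^\star=G-E(G[T_R])-E(G[T_B])$ is triangle-free and Mantel gives $e(G^\star)\le n^2/4$. The lower bound construction is fine as well (the paper uses a half-red half-blue $K_{n/2,n/2}$, but your two disjoint monochromatic cliques work equally).

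The gap is in the upper bound. Your case split ``$r+b=o(n)$'' versus ``apply stability to $G^\star$'' is not exhaustive: having $r+b=\Theta(n)$ does \emph{not} force $e(G^\star)$ to be near $n^2/4$. Indeed your own lower-bound construction (two disjoint monochromatic cliques) has $r+b=n$ and $e(G^\star)=0$. What you actually need is a quantitative trade-off of the form $a_r+a_b\le n^2/4-e(G^\star)+o(n^2)$ valid for \emph{every} value of $e(G^\star)$, and your stability computation only yields $a_r+a_b\le C\sqrt{\eta}\,n^2$ when $e(G^\star)=n^2/4-\eta n^2$; plugging back gives $e(G)\le n^2/4+C\sqrt{\eta}\,n^2-\eta n^2$, which for intermediate $\eta$ exceeds $n^2/4$. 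The balance identity you quote does not by itself close this: it only gives $a_r-a_b=e_B(G^\star)-e_R(G^\star)$, which bounds the difference, not the sum. Two smaller slips compound the issue: your stated reason for $T_R\cup T_B$ lying in one side of the bipartition is wrong (edges inside $T_R$ are \emph{removed} in $G^\star$; the correct reason is simply that $T_R\cup T_B$ is an independent set in $G^\star$), and the conclusion you aim for, ``$|T_R|+|T_B|=o(n)$'', is too strong --- even in the regime where stability applies you only get that $G[T_R]$ has a near-spanning independent set, hence $a_r=o(n^2)$, not that $T_R$ itself is small.

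For comparison, the paper's proof takes a rather different route: it partitions $V(G)$ by incident edge colours ($R$ = only red, $B$ = only blue, $M$ = both), observes that no triangle meets $M$, then repeatedly \emph{clones} higher-degree vertices within $R$ and $B$ so that $G[R]$ and $G[B]$ become complete multipartite with each part having a single common neighbourhood in $M$. This reduces the problem to an explicit finite-dimensional optimisation over the sizes of the parts $R_1,R_2,B_1,B_2,M_1,M_2$, which is then solved by local moves and case analysis. There is no stability step; the balance condition enters only through the constraints $|R|,|B|\le n/2$.
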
 \label{handle lemma}
We delay the proof of the lemma to Section \ref{handle section}, and we first show how it implies the main theorem of this section. Our technique is to use our Erd\H{o}s-Stone type theorem (Theorem \ref{ES theorem}) to find blow-ups of either non-monochromatic triangles or handles and finding appropriate embeddings inside the blow-ups.
\begin{proof}[Proof of Theorem \ref{cycle theorem}] There are two cases, depending on whether the cycle $C$ has isolated edges. 
\par \textbf{Case 1: } If the cycle $C$ contains isolated edges, we will use same construction from DeVos, McDonald, and Montejano (\cite{DMM}) which is as follows. Split the vertex set into three equal sized parts, add red edges between a pair of parts and add blue edges for another pair of parts. Let the part incident on both red and blue edges be independent. Let the other pair of the red bipartite graph be a red clique, and the other pair of the blue bipartite graph a blue clique. Add no edges between the red and the blue clique. It is easy to see that this construction contains no cycles with an isolated edge in either color class. And the density of this construction is $2/3$ whenever $n$ is a multiple of $3$. This establishes the lower bound. 
\par Now, we consider a large enough $(1/2)$-balanced graph with density $2/3 +\delta$ for some fixed positive $\delta$. In \cite{DMM} it is shown that $\mathrm{ex}(\frac{1}{2}, \mathcal{K}_3)=2/3$. If the graph is large enough, by Theorem \ref{ES theorem} we can find a $v(C)$-blow-up of a non-monochromatic triangle. To conclude this case, we just have to find an embedding of $C$ (whose blue edges are a disjoint union of even length paths) into this blow-up. 
\par To achieve this, we induct on the number of red and blue edges that are adjacent in $C$. Note that this has to be an even number. By the inductive hypothesis, we only have to embed a even length blue path followed by a red path that starts and ends on the same part of the blow-up. This is easy to do, casing on whether the red path is of even or odd length. 
\par \textbf{Case 2: } In this case, the blue paths are still of even length, and red paths are of length at least two. As $1/2$ is the trivial lower bound by the disjoint red and blue clique construction, we only need to show the upper bound. So assume we have a large enough $1/2$ balanced graph whose density exceeds $1/2$.
\par By our Lemma \ref{handle lemma} and Theorem $\ref{ES theorem}$ we can find a large enough blow-up of either a non-monochromatic clique or a handle. To complete the embedding, just like in the previous case, by induction we are only concerned with an addition of a blue path of even length, and a red path of length at least $2$, and we need to embed a path that starts and ends on the same part of the blow-up. If we have to embed to a blow-up of a non-monochromatic clique, we embed the red part of the graph to the bipartite graph whose color is different from the other two. To embed to a blow-up of a handle, we need to make sure to embed the blue part on the bipartite graph whose color is different from that of the monochromatic clique. Both cases are straightforward and we omit further details.
\end{proof}

\subsection{Handles and nonmonochromatic triangles}\label{handle section}
Our goal is now reduced to proving Lemma \ref{handle lemma}, i.e. finding either handles or non-monochromatic triangles in $1/2$-balanced bicolored graphs with edge density $1/2$. Note the lower bound in the lemma simply follows from coloring the edges of a $K_{n/2,n/2}$ half red, half blue, in an arbitrary fashion. 

Throughout this section, we fix $G$ to be a sufficiently large $1/2$-balanced graph containing neither a handle nor a nonmonochromatic triangle. We are done if we can show that $e(G) \lesssim n^2/4$. We fix a partition of the vertices of $G$: $$V(G)=R\sqcup B \sqcup M.$$ 

\par Here, $R$ (resp. $B$) denotes the vertices of $G$ that are only incident on red (resp. blue) edges. $M$ denotes the vertices that are incident on at least one red and one blue edge. Any vertex not in one of these sets would be an isolated vertex, the deletion of which would create a denser graph. Hence, we may assume $R$, $B$, and $M$ partition $V(G)$. Let $r$, $b$, and $m$ denote the sizes $R$, $B$ and $M$ respectively. 

\begin{lemma}\label{M triangle free}
For every triangle $T$ in $G$, $T\cap M=\emptyset$. In particular, $e(G[M])\leq m^2/4$, and for any edge $\{u,v\}$ contained in $R$ or $B$, $N(u)\cap N(v)\cap M =\emptyset$. 
\end{lemma}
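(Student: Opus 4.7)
The plan is to prove all three statements from a single structural observation: every triangle in $G$ is monochromatic (since $G$ is non-monochromatic-triangle-free), and if a vertex of such a monochromatic triangle lies in $M$, we directly produce a handle, contradicting our assumptions on $G$.

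More precisely, for the first assertion, I would argue by contradiction: suppose $v \in T \cap M$ for some triangle $T = \{v, x, y\}$. Since $G$ avoids both $T_1$ and $T_2$, the triangle $T$ is monochromatic; without loss of generality say all three of its edges are red. Because $v \in M$, there is some vertex $w$ with $vw$ a blue edge. The subgraph $T \cup \{vw\}$ is then precisely a red triangle with a pendant blue edge at a vertex of the triangle, i.e.\ a copy of $H_1$, contradicting our assumption that $G$ contains no handle. The argument is symmetric if $T$ is blue.

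The two ``in particular'' statements now follow immediately. Since no triangle of $G$ meets $M$, the induced subgraph $G[M]$ is triangle-free; Mantel's theorem then gives $e(G[M]) \le \lfloor m^2/4 \rfloor \le m^2/4$. For the second consequence, suppose $\{u,v\}$ is an edge with both endpoints in $R$ (the case of $B$ is identical), and suppose for contradiction that some $w \in M$ satisfies $w \in N(u) \cap N(v)$. Then $\{u,v,w\}$ spans a triangle in $G$ containing the vertex $w \in M$, contradicting the first part of the lemma.

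The main (and essentially only) step is the handle-extraction argument in the first paragraph; once that is in hand, the rest is bookkeeping. I do not anticipate any obstacle beyond being careful that $M$ is defined so that every vertex of $M$ really is incident to edges of both colors, which is exactly the hypothesis needed to produce the pendant edge $vw$ of the opposite color to $T$.
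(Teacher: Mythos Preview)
Your proposal is correct and follows essentially the same argument as the paper: assume a triangle meets $M$, use the absence of non-monochromatic triangles to deduce it is monochromatic, then use the opposite-colored edge at the $M$-vertex to build a handle; the two corollaries via Mantel's theorem and the common-neighbor triangle are exactly as in the paper.
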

\begin{proof}
Assume for the sake of contradiction there exists a triangle $T$ which has a vertex from $M$, call it $x$. We may assume the triangle is monochromatic, as $G$ is handle and nonmonochromatic triangle free. As $x$ is incident on both a red and a blue edge, regardless of the color of $T$, we may extend it to an handle through $x$. The bound on $e(G[M])\leq m^2/4$ immediately follows by Mantel's theorem. Adjacent vertices in $R$ or $B$ cannot have a common neighborhood in $M$, as this gives a triangle with a vertex inside $M$. 
\end{proof}

\begin{lemma}\label{M big independent set}
If a triangle free graph $H$ on $m$ vertices has an independent set of size $d$ where $d\geq m/2$, then $H$ has at most $d(m-d)$ edges.
\end{lemma}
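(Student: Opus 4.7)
The plan is to reduce to the case $d = \alpha(H)$ (the independence number of $H$) via a monotonicity argument, and then conclude by a simple vertex-degree sum.

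First, I would observe that the function $f(x) = x(m-x)$ is strictly decreasing on the interval $[m/2, m]$. Consequently, if $\alpha(H) \geq d \geq m/2$, then $\alpha(H)(m-\alpha(H)) \leq d(m-d)$. Hence it suffices to prove the stronger inequality $e(H) \leq \alpha(H)(m - \alpha(H))$, and I may therefore assume without loss of generality that $d = \alpha(H)$.

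Fix a maximum independent set $I \subseteq V(H)$ of size $d$, and set $J := V(H) \setminus I$, so $|J| = m - d$. I would then record two degree bounds. First, every vertex $v \in I$ has all of its neighbors in $J$ (since $I$ is independent), so $d_H(v) \leq |J| = m - d$. Second, every vertex $v \in J$ has an independent neighborhood (since $H$ is triangle-free, any two neighbors of $v$ are non-adjacent), so $|N_H(v)| \leq \alpha(H) = d$. Summing these bounds gives
\[
2 e(H) \;=\; \sum_{v \in I} d_H(v) + \sum_{v \in J} d_H(v) \;\leq\; d(m-d) + (m-d)\cdot d \;=\; 2d(m-d),
\]
which rearranges to the desired inequality $e(H) \leq d(m-d)$.

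The argument is quite short, so I do not anticipate a serious obstacle. The only step requiring attention is the monotonicity reduction at the start, which genuinely uses the hypothesis $d \geq m/2$; without this, one could not freely replace $d$ by $\alpha(H)$, and indeed the bound $d(m-d)$ becomes too weak to hope for otherwise (the balanced complete bipartite graph $K_{m/2,m/2}$ shows that the inequality is tight exactly at the threshold). The conceptual content of the main step is simply that the triangle-free hypothesis forces every vertex's neighborhood to be an independent set, so no vertex can have degree exceeding $\alpha(H)$.
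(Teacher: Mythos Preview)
Your proof is correct and follows essentially the same approach as the paper: the same monotonicity reduction to $d=\alpha(H)$, and the same key observation that triangle-freeness forces every degree to be at most $\alpha(H)$. The only cosmetic difference is that the paper sums degrees over $V(H)\setminus I$ alone (using that every edge has an endpoint there) to get $e(H)\leq (m-d)d$ directly, whereas you sum over all of $V(H)$ and divide by two.
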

\begin{proof}
If $d\geq m/2$ then the quantity $d(m-d)$ is decreasing in $d$, so without loss of generality we may assume that the independence number of $H$ is $d$. Let $I$ be an independent set of size $d$. Since $H$ is triangle-free, the neighborhood of any vertex is an independent set, and so $d(v) \leq d$ for all $v$. Since $I$ is an independent set, any edge must have at least one vertex not in $I$. Therefore 
$$e(H) \leq \sum_{v\not \in I} d(v) \leq \sum_{v\not\in I} d = (m-d)\cdot d.
$$ 
\end{proof}

Our approach is to modify $G$ into $G'$, decreasing neither the red nor the blue edges, until $G'$ has an easy to understand structure. Using the bound on $e(G[M])$ from Lemmas \ref{M triangle free} and \ref{M big independent set}, we will be able to bound the density of $G'$ from above.

\par For $v\in G$, let $G\cup \text{clone}(v)$ denote the graph obtained by adding to $G$ a new vertex $v'$ with $N(v')=N(v)$, and the color of an edge $\{v', x\}$ is same as that of $\{v, x\}$. In particular, $v$ and $v'$ are not adjacent in $G\cup \text{clone}(v)$. 

\begin{lemma}\label{clones} 
For any \bicolored $H$ avoiding handles and nonmonochromatic triangles, and $v\in V(H)$, $H\cup \text{clone}(v)$ still avoids handles and nonmonochromatic triangles. 
\end{lemma}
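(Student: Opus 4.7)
The plan is to proceed by contradiction. Suppose that $H':=H\cup \text{clone}(v)$ contains a forbidden subgraph $F$, i.e.\ an isomorphic copy of some element of $\{T_1, T_2, H_1, H_2\}$. Since $H\subseteq H'$ is forbidden-free by hypothesis, the copy of $F$ in $H'$ must use the new vertex $v'$. The argument then splits on whether or not the original vertex $v$ is also used.

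First I would handle the case $v\notin V(F)$. Define the map on vertices which sends $v'\mapsto v$ and fixes everything else. Because $v'$ was added as a non-adjacent twin of $v$, we have $N_{H'}(v')=N_H(v)$ with matching edge colors; and since $v\notin V(F)$, no edge of $F$ incident to $v'$ lands on $v$. Hence the map carries $F$ to an isomorphic (colored) copy inside $H$, contradicting that $H$ avoids $F$.

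Next I would reduce to the case $\{v,v'\}\subseteq V(F)$. Since $v$ and $v'$ are non-adjacent in $H'$, the pair $\{v,v'\}$ must be a non-edge of $F$. Every pair of vertices of a triangle is adjacent, so this immediately rules out $F$ being a nonmonochromatic triangle, leaving only the possibility that $F$ is a handle. By color-symmetry we may assume $F=H_1$: a red triangle on vertices $a,b,c$ together with a blue edge $ad$. The only non-edges of $H_1$ are $\{b,d\}$ and $\{c,d\}$, so $\{v,v'\}$ plays the role of one of these pairs, and by the $b\leftrightarrow c$ symmetry we may assume it plays the role of $\{b,d\}$. Then the vertex of $F$ playing the role of $a$ is joined by a red edge to whichever of $\{v,v'\}$ plays $b$, and by a blue edge to whichever plays $d$. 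But $N_{H'}(v)=N_{H'}(v')$ with identical edge colors, so the unique edge from $a$ to any element of $\{v,v'\}$ must have the same color on both sides, contradicting red versus blue.

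I expect the only subtle point to be keeping this case analysis tidy and verifying that once $\{v,v'\}$ is forced to be a non-edge, the structure of a handle (four vertices, four edges) gives no room to embed $\{v,v'\}$ consistently with $N(v)=N(v')$. No calculations are needed beyond inspecting the four edges of a handle.
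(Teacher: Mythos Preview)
Your proof is correct and follows essentially the same approach as the paper: the paper also observes that any new forbidden substructure must involve $v'$ (and, after the implicit swap $v'\mapsto v$, also $v$), that non-adjacency of $v$ and $v'$ rules out the triangle case, and then derives a contradiction from the clone property in the handle case. Your write-up is more explicit about the swap step and organizes the handle contradiction via the non-edges of $H_1$ rather than via which of $v,v'$ sits in the triangle, but the underlying argument is the same.
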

\begin{proof}
It is obvious that any new forbidden substructure in $H\cup \text{clone}(v)$ has to use both $v$ and the clone, $v'$. Since $v$ and $v'$ are non-adjacent, this structure can only be a handle. Yet, for any monochromatic triangles $v$ takes part in, $v'$ is adjacent to all three vertices of this triangle in the same color. Hence there cannot be handles in the new graph either. 
\end{proof}

Fix a permutation of $V(G)$ arbitrarily and set $G_0:=G$. While there is still a non-edge $\{u,v\}$ contained in either $R$ or $B$ such that $N(u)\neq N(v)$ in $G_i$ we perform the following operation to obtain $G_{i+1}$: we take $x\in\{u,v\}$ with larger degree (settle ties by index in the permutation), delete the lower degree vertex to acquire $G_i'$, and set  $G_{i+1}:=G_i'\cup \text{clone}(x)$. When the procedure terminates, we set the final graph to be $G'$. 
\par By Lemma \ref{clones}, $G'$ does not contain any forbidden substructure, and since we perform the cloning within $R$ and $B$, neither the red nor the blue edge count decreases at any point during the procedure. 
\begin{observation}
In $G'$, non-adjacency is an equivalence relation in $R$ and $B$. Thus, both $G'[R]$ and $G'[B]$ are complete multipartite graphs. 
\end{observation}
Indeed, if $\{x,y\}$ and $\{y,z\}$ are non-edges in $R$ (or $B$), by virtue of the procedure, $N(x)=N(y)=N(z)$, meaning that there cannot be an edge between $x$ and $z$. Since vertices in each maximal independent set of $R$ (and $B$) are clones of each other, their neighborhoods in $M$ are also identical, i.e. each equivalence class of non-edges in $R$ (and $B$) induces a complete bipartite graph between $R$ (or $B$) and $M$. Call $R_1,\cdots, R_k$ be the parts of $R$, and $B_1,\cdots B_l $ the parts of $B$. The next observation follows from Lemma \ref{M triangle free}.

\begin{observation}
For any $i\not=j$, the sets $N(B_i)\cap N(B_j)\cap M$ and $N(R_i)\cap N(R_j)\cap M$ are empty. 
\end{observation}

This observation implies that every vertex in $M$ sends edges to at most one $R_i$ and at most one $B_j$. Using this, we may refine the structure even more. Without loss of generality, assume that $|R_1|  \geq \cdots \geq |R_k|$ and $|B_1| \geq \cdots \geq |B_l|$. Define
\[
d:= \max \{|N(R_i) \cap M|, |N(B_j) \cap M|\}_{i,j}.
\]

By Lemma \ref{M triangle free}, $M$ contains an independent set of size $d$. Since $R$ and $B$ induce complete multipartite graphs, we have 
\begin{equation}\label{edges in R}
    e(R) = \binom{r}{2} - \sum_{i=1}^k \binom{|R_i|}{2}  \leq \binom{r}{2} - \binom{|R_1|}{2} - \binom{|R_2|}{2},
    \end{equation}
    \begin{equation}\label{edges in B}
    e(B) = \binom{b}{2} - \sum_{i=1}^l \binom{|B_i|}{2}  \leq \binom{b}{2} - \binom{|B_1|}{2} - \binom{|B_2|}{2}.
\end{equation}

Because each vertex in $M$ sends edges to at most one independent set in $R$ and at most one independent set in $B$, and because $|R_1| \geq \cdots \geq |R_k|$ and $|B_1| \geq \cdots \geq |B_l|$, we have
\begin{equation}\label{edges between M and R and B}
e(R,M) + e(B, M) \leq d(|R_1| + |B_1|) + (m-d)(|R_2| + |B_2|).
\end{equation}

Consider the following graph $G''$. Let $V(G'') = R'\sqcup M' \sqcup B'$, where $|R'| = r$, $|B'| = b$, and $|M'| = m$. Let $M' = M_1 \sqcup M_2$ where $|M_1| = \max \{m/2, d\}$. We will define the edges of $G''$ in $R'$ and $B'$, those with one endpoint in $M$ and then those with both endpoints in $M'$. 

Let $R'$ induce a complete multipartite graph with parts of size $|R_1|$, $|R_2|$, and all the rest of size $1$. Let $B'$ induce a complete multipartite graph with partite sets of size $|B_1|$, $|B_2|$, and all the rest of size $1$.

Place a complete bipartite graph between the independent set of size $|R_1|$ and $M_1$, between the independent set of size $|R_2|$ and $M_2$, between the independent set of size $|B_1|$ and $M_1$, and between the independent set of size $|B_2|$ and $M_2$.

Let $M_1$ and $M_2$ induce a complete bipartite graph in $M'$. 
\begin{figure}[t]
    \includegraphics[width=\textwidth]{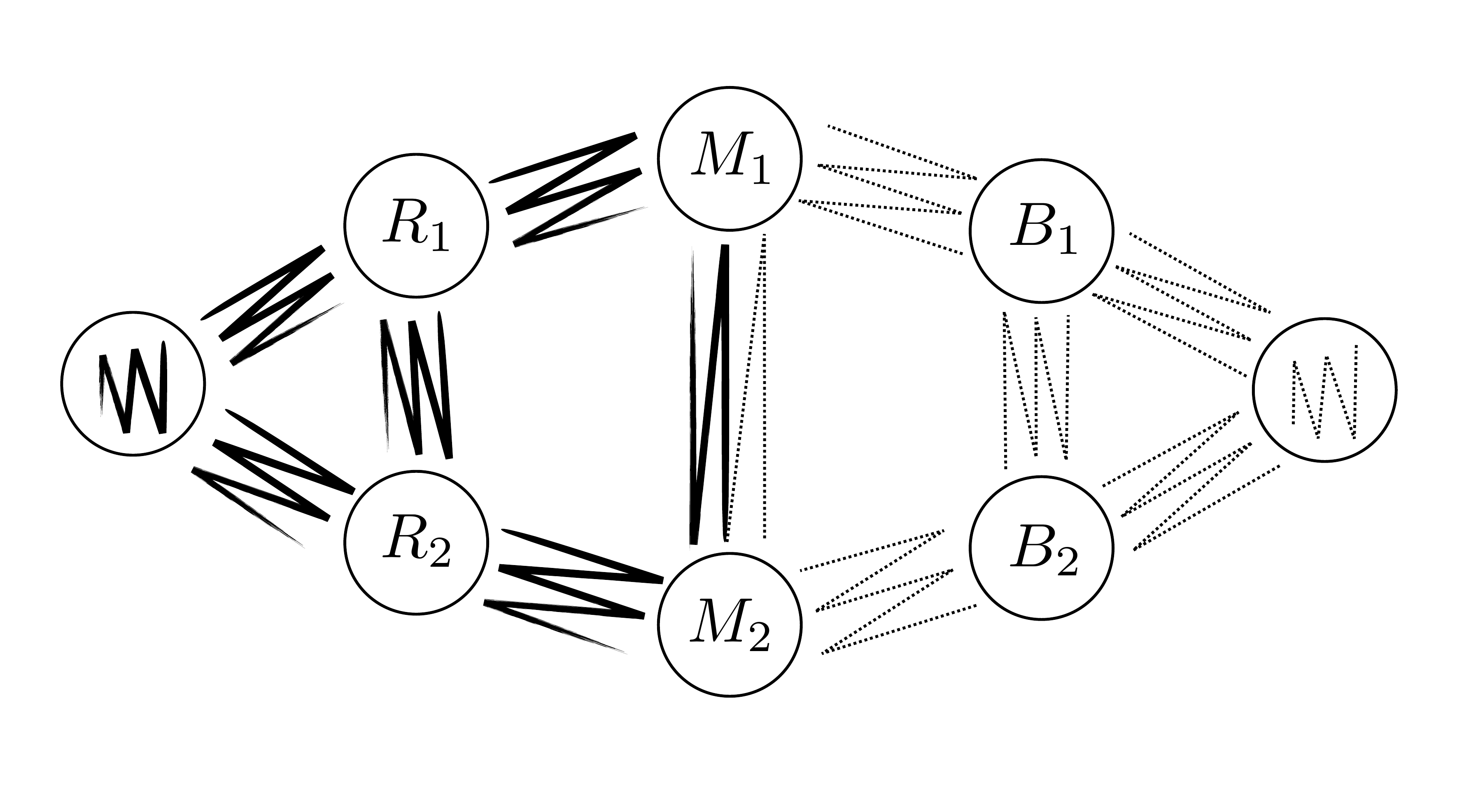}
    \caption{Graph $G''$. Solid indicates red, dotted indicates blue color. There are no restrictions on color of edges between $M_1$ and $M_2$, they can be red or blue. All labelled parts are independent sets, and the leftmost and rightmost sets are red and blue cliques respectively. The leftmost three sets union to $R'$, and rightmost three sets union to $B'$. 
}
    \label{fig:my_label}
\end{figure}

By Lemma \ref{M triangle free}, we have that $e(M') \leq m^2/4$. If $d\geq m/2$, by Lemma \ref{M big independent set} we have that $e(M) \leq d(m-d)$. Therefore, by equations \eqref{edges in R}, \eqref{edges in B}, and \eqref{edges between M and R and B}, we have $e(G'') \geq e(G')$, and so it suffices to show that $e(G'') \lesssim n^2/4$.

We claim that $|R'|$ and $|B'|$ must each be bounded above by $n/2$. Otherwise, say $|R'| >n/2$, we would have $|M\cup B| <n/2$ and so that number of blue edges in $G$ would be less than $\binom{n/2}{2}$. We will show that $e(G'') \lesssim n^2/4$ for any possible sizes of the parts, subject to the constraint that $|R'|, |B'| \leq n/2$. Assume that $G''$ has the maximum number of edges possible. Since we are only interested in an asymptotic result, we will work with densities to ease the calculations. Let $r' = |R'|/n=r/n$, $b' = |B'|/n = b/n$, $r_i = |R_i|/n$, $b_i = |B_i|/n$ and $m_i = |M_i|/n$. We will suppress negligible error terms, so for example if $|R'| \sim n/4$ we will write that $r'=\frac{1}{4}$ instead of $r' = \frac{1}{4} + o(1)$. Note that as long as $|R'|\not=|R_1|$ and $|B'|\not=|B_1|$, the sets $R_i$ and $B_i$ will be nonempty, since they are the sizes of the largest independent sets in $R$ and $B$ in $G$.

\textbf{Case 1: $b'=0$ or $r'=0$:} Without loss of generality, assume $b'=0$. If we have $r'-r_1-r_2=0$ and $b'=0$, then $G''$ is $o(n^2)$ edges from being bipartite, and hence we have $e(G'') \lesssim n^2/4$. Otherwise $R' \setminus (R_1 \cup R_2)$ is nonempty. Moving a vertex from $R'\setminus (R_1\cup R_2)$ to $R_i$ changes the number of edges in $G''$ by $(m_i - r_i)n$. Since $e(G'')$ is maximized, we must have $m_1=r_1$ and $m_2=r_2$. But since $b'=0$ and $r'\leq 1/2$, we have $m_1 + m_2 \geq 1/2$ and so $r_1+r_2$ must equal $1/2$. But now again $G''$ is $o(n^2)$ edges from being bipartite and we are done. 

\textbf{Case 2: $r'>0$, $b' >0$:} If $r' - r_1 -r_2 = 0$, then relabeling vertices in $R_1$ to be in $M_2$ and vertices in $R_2$ to be in $M_1$ changes the graph by $o(n^2)$ edges, and reduces to Case 1. Similarly, if $b'-b_1-b_2=0$, we may reduce to Case 1. So we may assume that both $r' - r_1-r_2$ and $b'-b_1-b_2$ are positive. Without loss of generality, assume that $r'\geq b'$.

Now, moving a vertex from $R_1$ to $R_2$ (recall that both are nonempty) changes the number of edges by $(r_1 - r_2+ m_2-m_1)n$ and moving a vertex from $B_1$ to $B_2$ changes the number of edges by $(b_1-b_2 + m_2-m_1)n$. Hence we have 
\[
r_1 - r_2 = m_1 - m_2 = b_1 - b_2.
\]

If $b' = 1/2$, then $r'=1/2$ (since we assumed $r'\geq b'$) and $m=0$, and $e(G'') \lesssim 2\binom{n/2}{2}$. So assume that $b' < 1/2$, and in particular $B'$ could gain vertices without violating the constraint. Then moving a vertex from $M_2$ to $B_1$ changes $e(G'')$ by $((b'-b_1-b_2) - r_2)n$ and moving a vertex from $M_1$ to $B_2$ changes $e(G'')$ by $((b'-b_1-b_2) - r_1)n$. Thus we have 
\[
r_1 = r_2 = b'-b_1-b_2,
\]
which implies that $m_1 = m_2$ and $b_1=b_2$. Now, moving a vertex from $B'\setminus (B_1\cup B_2)$ to $B_i$ changes $e(G'')$ by $(m_i - b_i)n$ and moving from $R'\setminus (R_1\cup R_2)$ to $R_i$ changes $e(G'')$ be $(m_i -r_i)n$. Therefore 
\[
r_1=r_2=b_1 = b_2 = m_1 = m_2 =b'-b_1-b_2.
\]

Finally, moving a vertex from $B'\setminus (B_1\cup B_2)$ to $R' \setminus (R_1 \cup R_2)$ (if it is possible without violating the constraint), changes the number of edges by $(r-b)n$. We therefore must have that either $r'=b'$ or $r=1/2$. 

When $r'=b' <1/2$, moving a vertex from $M_2$ to $R_1$ or $M_1$ to $R_2$ shows that $b_1 =b_2 = r'-r_1-r_2$. In this case we have that all $8$ parts have the same size, and $e(G'') \sim \frac{3}{16}n^2$. 

Otherwise, $r'=1/2$ and so $m_1+m_2 + b' = 1/2$. It follows that the seven parts which are the same size all have size $\frac{n}{10}$, and in this case $e(G'') \sim \frac{4}{25}n^2$.

In all cases, we have that $e(G) \leq e(G'') \lesssim n^2/4$, completing the proof of Lemma \ref{handle lemma}.

\section{Discussion}
Our Theorem \ref{thm:mainextremal} gives an upper bound on the $\ep$-balanced Tur\'an number of a family as a function of the $\ep$-balanced Ramsey number of a complete graph, and blowups of $\ep$-balanced Ramsey graphs can also give lower bounds on the $\ep$-balanced Tur\'an number of a family. In general, we do not have any methods for determining exact values of either the $\ep$-balanced Ramsey function or the $\ep$-balanced Tur\'an function, and it would be very interesting to understand when the extremal graphs for the $\ep$-balanced Tur\'an function are given by blowups of $\ep$-balanced Ramsey graphs. In particular, we believe this to be true asymptotically when avoiding a non-monochromatic $K_k$ with $k$ sufficiently large (Conjecture \ref{nonmono clique conjecture}).

One potential difficulty in proving Conjecture \ref{nonmono clique conjecture} is that the equality does not hold for $k\in \{3, 4\}$. The theorem of DeVos, McDonald, and Montejano \cite{DMM}, that $\mathrm{ex}(\frac{1}{2}, \mathcal{K}_3)=2/3$, shows that a $\frac{1}{2}$-balanced complete bipartite graph is not extremal for avoiding a non-monochromatic triangle. For $k=4$, consider the following construction due to Urschel \cite{john}. Let $A$ be a clique and let $B,C,D$ be the partite sets of a complete $3$-partite graph. Join $A$ to the graph induced by $C\cup D$. Any $K_4$ in this graph must use at least one vertex from $A$. Therefore, to avoid a nonmonochromatic $K_4$, color blue all edges within $A$, between $A$ and $C\cup D$, and between $C$ and $D$. Color the edges between $B$ and $C\cup D$ so that the whole graph is $\frac{1}{2}$ balanced (note, this puts constraints on the sizes of $A$, $B$, $C$, $D$). Optimizing over the sizes of the parts gives that $\mathrm{ex}(\frac{1}{2}, \mathcal{K}_4) > 0.67508 > \frac{2}{3}$.  It would be interesting to determine the exact value of $\mathrm{ex}(\frac{1}{2},\mathcal{K}_4)$.

In a more general setting, removing the constant factor in our Theorem \ref{thm:mainextremal} would be quite interesting, as already stated in the Introduction. Denoting the family of $t$-unavoidable graphs by $\mathcal{I}_t$, Theorem \ref{thm:mainextremal} says $\textup{ex}(\ep, \mathcal{I}_t)\leq  1-\frac{1}{O(R(\ep',t))}$, where the suppressed constant term depends on how close $\ep'$ is to $\ep$. On the other hand, taking a blow-up of $\ep$-balanced Ramsey graphs shows that $\mathrm{ex}(\ep, \mathcal{I}_t)\geq 1-  \frac{1}{R(\ep, t) -1}$. 
\par Removing the constant term that appears in the upper bound for $\mathrm{ex}(\ep, \mathcal{I}_t)$ would show that blow-up of Ramsey graphs are asymptotically the densest construction avoiding unavoidable graphs. One way of achieving this would be through removing the constant factor $C$ in Lemma \ref{lem:mainlemma}. Explicitly, it would be interesting to determine if for any $\ep'<\ep$, any sufficiently large graph with density $1-\frac{1}{k}$ contains an $\ep'$-balanced graph on $(1-o(1))k$ vertices.

We mention a conjecture of a similar flavor by Diwan and Mubayi \cite{DM}. They conjecture that given any red-blue edge coloring of a $K_{k}$, then any union of  a red and blue graph each with at least $\mathrm{ex}(n, K_k)$ edges (the union will be a multigraph) will contain this colored $K_k$. They prove their conjecture when the coloring contains a monochromatic clique on $k-1$ vertices. They also ask whether an Erd\H{o}s-Simonovits-Stone theorem is possible in this context. Our Theorem \ref{ES theorem} can give some information in this setting, but we were not able to define a chromatic number-like parameter that would satisfactorily answer their question. 

It would be interesting to investigate $\mathrm{ex}(\ep, \mathcal{K}_k)$ and $\mathrm{ex}(\ep, \mathcal{C}_k)$ when $\ep < \frac{1}{2}$.

Finally, in this paper we studied what happens when one forbids a color-consistent family of graphs. It would be interesting to understand what happens when forbidding, for example, a nontrivial subset of the family of non-monochromatic complete graphs or cycles.

\section*{Acknowledgements}
We would like to thank Boris Bukh for many helpful discussions and ideas throughout the project, Bernard Lidick\'y for doing flag algebra calculations for us, and John Urschel for the construction of graphs avoiding nonmonochromatic $K_4$.

\bibliographystyle{plain}
\bibliography{bib}
\newpage
\section{Appendix}
Here we provide the details for the proof of Theorem \ref{thm:linearramsey} which we omitted in the paper, as it closely resembles the original proof of Theorem \ref{chvatal}.
\begin{proof}[Proof of Theorem \ref{thm:linearramsey}]
Let $H$ be an inevitable graph with maximum degree $\Delta$. Given $\ep$, and $H$, we will fix a regularity parameter $\ep_0$, whose value is chosen later. We also fix an $\ep$-balanced graph $G$ with $v(G)\geq C\cdot v(H)$, where $C$ is a constant that depends on $\ep_0$ and $\Delta$, exact value to be specified later. Our goal is to find a copy of $H$ in $G$. 
\par We apply Corollary \ref{regularity corollary} to $G$ with parameter $\ep_0$. We delete all edges incident to junk set (at most $\ep_0n^2$) and between pairs which are not $\ep_0$-regular (at most $\ep_0n^2/2$), and all red edges between parts with red density less than $\ep/10$ (at most $\ep n^2/20$), and similarly for blue. In total, we have lost at most (being generous) $(\ep/5 + 4\ep_0)\binom{n}{2}$ edges. In particular, choosing $\ep_0$ small will ensure that the graph is still at least $(\ep/2)$-balanced. Call this resulting graph $G'$. Note that $G'$ has colored edges between every $\ep_0$-regular pair, and for every such pair, at least one of the color classes will have density greater than $\ep/10$. 
\par We use Lemma \ref{lem:clusterlemma} with the auxiliary graph $\mathcal{R}$ whose vertices $P_i$ ($i\in[k]$) correspond to the $k$ parts in the cluster graph to find a coloring of $\mathcal{R}$ where the color of a pair indicates at least $(\ep/10)$ density of that color in the pair. And further, the density of the smaller color class changed at most by $\gamma:=\max(\gamma_R, \gamma_B)$. Since we choose $\ep_0$ after $\ep$ is fixed, we can choose $\ep_0$ small enough to bound $\gamma< \ep/100$. Thus, since $G'$ was $\ep/2$ balanced, the coloring of $\mathcal{R}$ will surely be at least $(\ep/4)$-balanced. And furthermore, we know that the density of this \bicolored $\mathcal{R}$ is at least $1-\ep_0$. 
\par Thus, we can apply our Theorem \ref{thm:mainextremal} to say there exists a constant $K$ such that for any $t$ which satisfies
$$\ep_0 \leq \frac{1}{K\cdot R(\ep/4, t)}$$
we can find an unavoidable $t$-graph in the color cluster graph. Using bounds on $R(\ep/4,t)$, we obtain that for a constant $K'$, we can set $t\geq K'\log(\ep_0^{-1})$. Note this allows us to make $t$ as large as we want by making $\ep_0$ small. Let's choose $\ep_0$ so that $K'\ep_0^{-1}\geq \Delta + 1$. 
\par We are at the last section of the proof where we just have to embed $H$ into the clusters. Let's say that the unavoidable graph in the cluster graph we found was of Type $1$ (the proof is similar if it is of Type $2$). As $H$ is inevitable, $H$ has an embedding to a large enough Type $1$ graph. In both the left hand side and right hand side of this embedding, the maximum degree is at most $\Delta$, hence there exists a partition of each side into $\Delta + 1$ independent sets. Now, we will go through the independent sets and embed each vertex in a greedy fashion to the corresponding cluster, using only $\ep_0$-regularity, and that the density between each pair is at least $\ep/10$.

\par Our goal is to find an embedding $f:V(H) \to V(G)$ which respects incidence. We will find the embedding iteratively and greedily. For each vertex $u$ we will maintain a set of ``potential" vertices in $G$ such that if $u$ were mapped to any of these vertices, incidence would be respected. We do this as follows.

\par  For any vertex $u\in V(H)$, denote by $\mathcal{C}_{u, i}$ the set of potential vertices in the corresponding cluster to which $u$ can be embedded after $i$ vertices have already been embedded. More precisely, $\mathcal{C}_{u, i}$ is composed of vertices $v\in \mathcal{C}_{u, i-1}$ such that for all $x\in V(H)$ whose embedding to the graph $f(x)\in V(G)$ is defined by the $i^{th}$ step $u\sim x \implies v\sim f(x)$. This condition ensures that the embedding preserves incidence. We will prove by induction that $\mathcal{C}_{u,i}$ is always larger than $v(H)$, and thus we may always choose an $f(u)$ such that $f$ is both an injection and respects incidence. We set $\mathcal{C}_{u, 0}$ to be just the entirety of the cluster we plan to embed $u$ inside. Say we are in the $(i+1)^{th}$ stage of the algorithm. Let $g(u,i):=\# \text{ of neighbors of $u$ already embedded before step $i$}$.  Assume inductively that
$$|\mathcal{C}_{u, i}|\geq |\mathcal{C}_{u, 0}|\cdot (\ep/10 - \ep_0)^{g(u,i)} $$
for all unembedded vertices $u$. So for some fixed unembedded $u$, we have a set $\mathcal{C}$ with  $|\mathcal{C}|\geq |\mathcal{C}_{u,0}|(\ep/10 - \ep_0)^{g(u,i)} - v(H)$ that we can choose for $f(u)$, accounting for vertices from the same cluster that might already have been used. To select $f(u)$ in a way that will preserve the induction hypothesis, we invoke $\ep_0$-regularity between $\mathcal{C}$ and all the $\mathcal{C}_{u', i}$ for $u'\sim u$ and $u'$ is not embedded yet. We may do so, as long as $|\mathcal{C}_{u,0}|\ep_0<|\mathcal{C}_{u,0}|(\ep/10 - \ep_0)^{\Delta} - v(H)$. Since $|\mathcal{C}_{u,0}| = v(G) / k \geq C\cdot v(H)/k$ where $k$ is the number of clusters, we may choose $\ep_0$ small enough, and then $C$ large enough (the number of clusters is bounded above depending only on $\ep_0$) so that the inequality holds. Thus, we can apply $\ep_0$ regularity which tells us that all but at most $\Delta \ep_0$ fraction of $\mathcal{C}$ has at least $(\ep/10 - \ep_0)|\mathcal{C}_{u',i}|$ neighbors in each $\mathcal{C}_{u',i}$ for each unembedded $u'$. As $(1-\Delta\ep_0)|\mathcal{C}|\geq 1$, we can choose a $f(u)$ that preserves our inductive invariant. Hence we will be able to embed all of $H$ in $G$. \end{proof}

\end{document}